\newcommand{\rank}{{\rm rank}}
\newcommand{\p}{p}
\newcommand{\bu}{{u}}
\newcommand{\bm}{m}
\newcommand{\C}{\mathcal C}
\newcommand{\gs}{\mathcal G_{\C}} 
\newcommand{\pogp}{\langle G', \bm' \rangle}
\newcommand{\pogo}{\langle \overline G, \overline \bm \rangle}
\newcommand{\pogom}{\langle G, \overline \bm \rangle}
\newcommand{\tgT}{\langle G, \bm_T \rangle}
\newcommand{\Tor}{\mathcal T_0} 
\newcommand{\T}{\mathcal T} 
\newcommand{\pofw}{(\langle G, \bm \rangle,\p)} 
\newcommand{\pofwpr}{(\langle G', \bm' \rangle,\p')} 
\newcommand{\pog}{\langle G, \bm \rangle} 
\newcommand{\dpfwo}{( \langle G^{\bm}, L_0 \rangle, \p^{\bm})} 
\newcommand{\bbog}{\langle H, \bm \rangle} 
\newcommand{\R}{\mathbf R} 
\definecolor{desk}{rgb}{.345, .306, .216}
\definecolor{vancouver}{rgb}{.412, .412,.412}
\definecolor{beetle}{rgb}{.180, .161, .102}
\definecolor{bluey}{rgb}{.235, .380, .415}
\definecolor{melon}{rgb}{1, .259, .259}
\definecolor{vneck}{rgb}{.596, .282, .376}
\definecolor{pink}{rgb}{.918, .122, .545}
\definecolor{mango}{rgb}{1, .8, .267}
\definecolor{lips}{rgb}{.541, .074, .239}
\definecolor{sage}{rgb}{.522, .604, .247}
\definecolor{moss}{rgb}{.184, .224, .129}
\definecolor{cumin}{rgb}{.6, .580, 0}
\definecolor{lichen}{rgb}{.745, .998, .729}
\definecolor{rain}{rgb}{.780, .812, .706 }
\definecolor{cloud}{rgb}{.961, .976, .870}
\definecolor{couch}{rgb}{.8, 1, .2}
\definecolor{cement}{rgb}{.678, .682, .549}
\definecolor{sky}{rgb}{.278, .514, 1}
\newtheorem{thm}{Theorem}[section]
\newtheorem{cor}[thm]{Corollary}
\newtheorem{lem}[thm]{Lemma}
\newtheorem{prop}[thm]{Proposition}
\theoremstyle{remark}
\theoremstyle{remark}
\newtheorem{ex}[thm]{Example}
\theoremstyle{remark}
\begin{document}

\title{Inductive constructions for frameworks on a two-dimensional fixed torus}
\author{
{Elissa  Ross
\thanks{eross2@wpi.edu, Worcester Polytechnic Institute, Worcester, Masschusetts}}
}
\maketitle

\begin{abstract} 

An infinite periodic framework  in the plane can be represented as a framework on a torus, using a $\mathbb Z^2$-labelled gain graph. We find necessary and sufficient conditions for the generic minimal rigidity of frameworks on the two-dimensional fixed torus $\Tor^2$. It is also shown that every minimally rigid periodic orbit framework on $\Tor^2$ can be constructed from smaller frameworks through a series of inductive constructions. These are fixed torus adapted versions of the results of Laman and Henneberg respectively for finite frameworks in the plane. The proofs involve the development of inductive constructions for $\mathbb Z^2$-labelled graphs. \\

\noindent
{\bf MSC:} 
52C25 \\

\noindent 
{\bf Key words:}  infinitesimal rigidity, generic rigidity, periodic frameworks, inductive constructions, gain graphs
\end{abstract}


\section{Introduction}
The study of the rigidity of periodic frameworks has witnessed an explosion of interest in recent years \cite{BorceaStreinuII,periodicFrameworksAndFlexibility,DeterminancyRepetitive,Theran,InfiniteBarJointFrameworksCrystals,PowerAffine,SymmetryPeriodic}. This is due in part to questions raised by the materials science community about the rigidity or flexibility of {\it zeolites}, a type of mineral with crystalline structure characterized by a repetitive porous pattern \cite{flexibilityWindow, enumerationTetrahedral}. This type of material can be modelled as a fragment of an infinite periodic framework. 

A periodic framework in the plane consists of a locally finite infinite graph together with a placement of the vertices in $\mathbb R^2$ such that the resulting object is symmetric with respect to the free action of $\mathbb Z^2$ \cite{periodicFrameworksAndFlexibility}. Such a periodic framework has a finite number of vertex and edge orbits with respect to the periodic (translational) symmetry. 


In this paper we consider periodic frameworks as orbit frameworks on a topological torus, where we use the torus as a ``fundamental region" for a tiling of the plane. In particular, we study frameworks on a flat torus of fixed size and shape (the {\it fixed torus}). 
Considering the rigidity of frameworks on the fixed torus corresponds to the study of the rigidity of frameworks with {\it forced periodicity}. Such a framework is constrained to remain periodic (with the same translational symmetry) throughout its motion.  
When the torus is {\it not} fixed, and we instead allow the torus to undergo affine deformation, then a flexible framework on this {\it variable torus} corresponds to an infinite periodic framework with the property that the velocities of the vertices that are ``far away from the centre" will become arbitrarily large. This may be problematic for the representation of physical materials as periodic frameworks, which partly motivates the use of the fixed torus. 

We regard an infinite periodic framework as a finite graph $G$ realized on the fixed torus, where $G$ is the orbit graph under the periodic symmetry. To represent this, we use a labelled multigraph $\pog$ where $m$ is a labelling of the directed edges of $G$ by elements of $\mathbb Z^2$. This is a {\it gain graph:} a graph with edges that are labeled invertibly by group elements. The edge labels provide information about how the framework `wraps' around the torus, or equivalently, how the periodic framework is connected together. The gains can also be summed along paths or cycles in the graph, which will be crucial for providing a combinatorial description of generic rigidity on the fixed torus.

In a previous paper \cite{ThesisPaper1} we found necessary conditions for the rigidity of a framework on a fixed $d$-dimensional torus. The present paper provides sufficient conditions for the generic rigidity of a framework $\pog$ on a $2$-dimensional fixed torus, which depends in part on the labelling $m$ of the edges. In particular, we prove that the generic rigidity of a periodic orbit framework on the torus depends on what we call {\it constructive cycles}, which are unbalanced cycles in the gain graph (the sum of the labels on the cycle is not the group identity).
This can be viewed as a fixed torus version of Laman's theorem, which characterizes the generic rigidity of finite frameworks in the plane. This result was proved independently in \cite{Theran}, as part of a more general statement about frameworks with a variable lattice (i.e. frameworks on a variable torus), which we elaborate on in the next section. 

We also prove a fixed torus adapted version of Henneberg's theorem, which is a recursive method of creating larger generically rigid frameworks from smaller ones.
To prove both results, we use {\it inductive techniques}, building up minimally rigid frameworks from smaller minimally rigid frameworks by systematically adding vertices and edges to the underlying graph according to certain rules. These inductive moves add to a vocabulary of methods that may be applied to a broad class of problems concerning periodic frameworks. The inductive techniques presented here are truly ``local" moves, in the sense that they could be viewed as usual Henneberg moves from finite rigidity theory performed on each cell of a periodic framework simultaneously (see Figures \ref{fig:vertexAdditionTorus} and \ref{fig:edgeSplitTorus}). In particular, the torus moves are defined to preserve the net gains on any cycles they alter, which in turn preserves the basic structure of the periodic framework. Defining the inductive moves this way allows us to prove that they preserve infinitesimal rigidity using the basic ideas for finite frameworks due to Whiteley \cite{SomeMatroids}. In addition, the proof of Laman's theorem found in \cite{SomeMatroids} can be easily adapted to our setting, demonstrating that proving rigidity results for group-labeled graphs is no harder than for unlabelled graphs in this case.

\subsection{Results in context}

One of the earliest investigations into frameworks on a fixed torus is due to Whiteley \cite{UnionMatroids}.  The following result from that paper shows that given a graph $G$ with certain combinatorial properties, we can always find an appropriate gain assignment $\bm$ and geometric realization $\p$ to yield a minimally rigid framework on the $d$-dimensional fixed torus (the equivalence of {\it (i)} and {\it (ii)} is a well known result due to Nash-Williams \cite{NashWilliams}). 

\begin{thm}[Whiteley, \cite{UnionMatroids}]
For a multigraph $G$, the following are equivalent:
\begin{enumerate}[(i)]
\item $G$ satisfies $|E| = d|V|-d$, and every subgraph $G' \subseteq G$ satisfies $|E'| \leq d|V'| - d$, 
\item $G$ is the union of $d$ edge-disjoint spanning trees,
\item For some gain assignment $m$ and some realization $p$, the framework $\pofw$  is minimally  rigid on the $d$-dimensional fixed torus.
\end{enumerate}
\label{waltersThm}
\end{thm}
\noindent The goal of the present paper is to strengthen and broaden the scope of Theorem \ref{waltersThm} in two dimensions. In particular, we will answer the question ``for {\it what} gain assignments $m$ is $\pog$ generically minimally rigid on $\Tor^2$?"

The algebraic-geometric theory of $d$-dimensional periodic frameworks was set out by Borcea and Streinu \cite{periodicFrameworksAndFlexibility}. In our language their set-up corresponds to the {\it variable torus}, where the periodic lattice is allowed to undergo affine transformation. The basic ideas of \cite{periodicFrameworksAndFlexibility} have now become standard for the study of periodic frameworks. The ideas presented in this paper are specialized to the fixed torus, and were developed independently in \cite{myThesis}.

As mentioned above, Malestein and Theran have established a characterization of generic rigidity on the variable torus \cite{Theran}. The broad range of methods used there are quite different from what is presented here. They involve matroid representations and the introduction of {\it periodic direction networks}, which are a type of infinite multigraph where each edge is (periodically) assigned a direction. It is not clear that inductive methods such as those presented here can be used to characterize rigidity on a variable torus.   The development of inductive techniques for the variable torus is a challenging and interesting open problem (see also Section \ref{sec:conclusion} for further discussion). 

Inductive techniques are both general and widely used \cite{inductionSurvey}. In addition to providing a characterization of generic rigidity in the plane \cite{Henneberg}, Henneberg-type moves easily adapt to $d$-dimensional (finite) frameworks. Inductive techniques have also played a key role in the development of {\it global rigidity}, the study of graphs with unique realizations \cite{bergJordan, GenGlobalRigidity}. Furthermore, inductive methods also appear in the study of special classes of frameworks, for example Schulze's work on symmetric frameworks \cite{BS3}, and Nixon, Owen and Power's exploration of frameworks supported on surfaces embedded in $\mathbb R^3$ \cite{Nixon}. The inductive methods presented in the paper may be useful to prove other results about periodic frameworks.

\subsection{Outline of paper}
In Section \ref{sec:background} we review the basic ideas about periodic frameworks and their representations as orbit frameworks on a fixed torus. We also outline the key facts about the rigidity of these frameworks. In Section \ref{sec:generatingIsostaticFrameworks} we define gain-preserving inductive constructions on periodic orbit frameworks, and prove our first main result, namely a fixed torus Henneberg theorem. Section \ref{sec:combinatorial} describes the combinatorial structure of the class of graphs which are minimally rigid on the fixed torus. Building on those results, in Section \ref{sec:gainAssignmentsDetermineRigidity} we prove the second main theorem, a Laman-type theorem, which characterizes minimal rigidity for frameworks on the fixed torus. Finally, Section \ref{sec:conclusion} closes by connecting this work with some existing extensions, and identifying some areas for further work. 

\section{Background}
\label{sec:background}
The full background for the present work is recorded in an earlier paper \cite{ThesisPaper1}. We summarize here the essential definitions and results.

\subsection{Graph theory conventions}
\label{sec:graphTheory}

We denote a graph by $G=(V, E)$ where $V = V(G)$ and $E=E(G)$ are the vertex and edge sets. We assume that $G$ is a multigraph, with multiple edges permitted. To simplify notation, if a graph is denoted by $G_k$, we use $V_k$ and $E_k$ to denote the vertex and edge sets respectively.  

A {\it subgraph} $H \subseteq G$ is a graph whose vertex set is a subset of that of $G$. A subgraph is called {\it vertex-induced} is for every pair of vertices $x y \in V(H)$, the edge connecting $x$ and $y$ is an element of $E(H)$ if and only if it is an edge of $G$. In this case $E(H)$ is called the edge set {\it spanned by} $V(H)$. 

For a vertex $x$ in a graph $G$, the {\it neighbours} of $x$ are the vertices of $G$ that are connected to $x$ by an edge, and we denote this by set by $N(x)$. Two vertices are called {\it adjacent} if they are connected by an edge. Two edges are {\it incident} if they share a vertex. An edge and a vertex on that edge are also called {\it incident}. The following terminology from \cite{pebbleGameSparse} will be useful. A graph $G$ is $(k, \ell)${\it -sparse} if every subset of vertices $V' \subseteq V$ satisfies $|E'| \subseteq k|V'| - \ell$, where $E'$ is the set of edges spanned by $V'$. If, in addition, $|E| = k|V| - \ell$, then $G$ is called $(k, \ell)${\it -tight}.

\subsection{Periodic orbit frameworks on $\Tor^2$}

{\it Periodic frameworks} in the plane are locally finite infinite graphs which are symmetric with respect to the free action of $\mathbb Z^2$, together with a periodic realization of the vertices in $\mathbb R^2$. This implies that the framework has a finite number of vertex and edge orbits under the action of $\mathbb Z^2$.  Further details on periodic frameworks can be found in the work of Borcea and Streinu \cite{BorceaStreinuII,periodicFrameworksAndFlexibility}. 

Our approach in this paper is to consider periodic frameworks as orbit frameworks on a torus. The $2$-dimensional topological torus can naturally be considered a {\it fundamental region} for a tiling of the plane.  We will consider the rigidity of frameworks on the torus as a model of the rigidity of periodic frameworks in the plane. 

In particular, we will consider frameworks on the {\it fixed torus}, which we now define. Let $L_0$ be a $2 \times 2$ matrix whose rows are independent vectors of the form $(x, 0), (y_1, y_2)$, $x, y_1, y_2 \in \mathbb R$. Let $ \mathbb Z^2 L_0$ denote the group generated by the rows of $L_0$, viewed as translations of $\mathbb R^2$. We call $\mathbb Z^2 L_0$ the {\it fixed lattice}, and $L_0$ the {\it lattice matrix}. The quotient space $\mathbb R^2 / \mathbb Z^2 L_0 $ will be called the {\it fixed torus}, and we denote it by $\Tor^2$. 

\subsubsection{Gain graphs}
A {\it gain graph} $\pog$ is a directed multigraph $G=(V, E)$ together with an invertible labelling of the edges by the elements of a group $\mathcal G$, which is called the {\it gain group}. In particular, $m:E^+ \rightarrow \mathcal G$ where $E^+$ represents the forward-directed edges of $E$. If a directed edge $e \in E$ has $m(e) = g$, then the other direction of the edge ($-e$) has label $g^{-1}$. The group label on the edge is called the {\it gain}. In the context of graphs embedded into surfaces, gain graphs are often called {\it voltage graphs} \cite{TopologicalGraphTheory, BiasedGraphsI}. 

If $\pog$ is a gain graph, and $G' \subset G$ is a subgraph of $G$, then $G'$ induces a sub-(gain)-graph of $\pog$, which we denote by $\langle G', m' \rangle$. Here $m'$ is the labelling on the edges from $\pog$, restricted to the edges of $G'$. 

\subsubsection{Periodic orbit graphs and frameworks}
A (two-dimensional) {\it periodic orbit framework} is a pair $\pofw$, where $\pog$ is a gain graph with the two-dimensional integer lattice $\mathbb Z^2$ as the gain group, together with a map $p: V \rightarrow \Tor^2$ that describes the position of the vertices of $\pog$ on the fixed torus $\Tor^2$. The graph $\pog$ will be called a {\it periodic orbit graph}. 

The edges of $\pog$ (denoted $E\pog$) are recorded as follows: $e=\{v_i, v_j; m_e\}$, where $m_e \in \mathbb Z^2$. Since $m$ labels the edges of $G$ invertibly, it follows that we can equivalently write $e=\{v_j, v_i; -m_e\}$.

An example of a periodic orbit graph $\pog$ is shown in Figure \ref{fig:gainGraph}(a). From the periodic orbit graph $\pog$ we can define the {\it derived graph} $G^m$ (also called the {\it covering graph} \cite{TopologicalGraphTheory, gainGraphBibliography}), shown in Figure \ref{fig:gainGraph}(b). The derived graph $G^m$ has vertex set $V^m$ and $E^m$ where $V^{\bm}$ is the Cartesian product $V \times \mathbb Z^2$, and $E^{\bm} = E \times \mathbb Z^2$. Vertices of $V^{\bm}$ have the form  $(v_i, a)$, where $v_i \in V$, and $a \in \mathbb Z^2$. Edges of $E^{\bm}$ are denoted similarly.  If $e$ is the directed edge connecting vertex $v_i$ to $v_j$ in $\pog$, and $b$ is the gain assigned to the edge $e$, then the edge $(e, a) = \{(v_i,a), (v_j, a+b)\}$ of $G^{\bm}$ connects vertex $(v_i,a)$ to $(v_j, a+b)$. Thus, the derived graph is a graph whose automorphism group contains $\mathbb Z^2$. 

In a similar way, from the periodic orbit framework $\pofw$ we can define the {\it derived periodic framework} $\dpfwo$, where $p^m$  is given by
\[p^m(v, z) = p(v) + zL_0.\]

\begin{figure}[h!]
\begin{center}
\subfloat[$\pog$]{\label{fig:finiteGraph}\begin{tikzpicture}[auto, node distance=2cm, thick]
\tikzstyle{vertex1}=[circle, draw, fill=couch, inner sep=1pt, minimum width=3pt, font=\footnotesize];
\tikzstyle{vertex2}=[circle, draw, fill=lips, inner sep=1pt, minimum width=3pt, font=\footnotesize];
\tikzstyle{vertex3}=[circle, draw, fill=melon, inner sep=1pt, minimum width=3pt, font=\footnotesize];
\tikzstyle{vertex4}=[circle, draw, fill=bluey, inner sep=1pt, minimum width=3pt, font=\footnotesize];
\tikzstyle{gain} = [fill=white, inner sep = 0pt,  font=\footnotesize, anchor=center];

\node[vertex1] (1) {$1$};
\node[vertex2] (2) [below right of=1] {$2$};
\node[vertex3] (3) [below left of=2] {$3$};
\node[vertex4] (4) [below left of=1] {$4$};


\pgfsetarrowsend{stealth}[ shorten >=1cm]
\path
(3) edge node[gain] {$(1,0)$} (1)
(1) edge [bend right] node[gain] {$(0,1)$} (4);
\pgfsetarrowsend{}

\path (1) edge  (2)
(2) edge  (3)
(4) edge [bend right]   (1)
(4) edge  (3);

\end{tikzpicture}}\hspace{.5in}%
\subfloat[$G^m$]{\label{fig:periodicFramework}\includegraphics[width=1.5in]{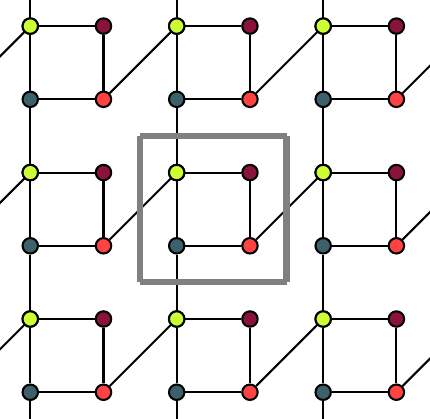}}

\caption{A periodic orbit graph $\pog$, where $m:E \rightarrow \mathbb Z^2$ (a). Edges without labels have identity label $m=(0,0)$. A fragment of its derived graph $G^m$ is shown in (b).  We use graphs with vertex labels as in (a) to depict periodic orbit graphs, and graphs without such vertex labels will record derived graphs, or graphs that are realized in $\mathbb R^d$. \label{fig:gainGraph}}
\end{center}
\end{figure}

\subsection{Rigidity theory for periodic orbit frameworks on $\Tor^2$}
\subsubsection{Infinitesimal motions of periodic orbit frameworks}
An {\it infinitesimal motion} of a periodic orbit framework $\pofw$ on $\Tor^2$ is an assignment of velocities to each of the vertices, $\bu: V \rightarrow \mathbb{R}^2$, with $u(v_i) = u_i$ such that 
 \begin{equation}
(\bu_i - \bu_j)\cdot(\p_i - \p_j-\bm_eL_0) = 0
\label{eqn:fixTorMot}
\end{equation}
for each edge $e = \{v_i, v_j;\bm_e\} \in E\pog$. An infinitesimal motion preserves the lengths of the bars of the framework. 

A {\it trivial infinitesimal motion} of $\pofw$ on $\Tor^2$ is an infinitesimal motion that preserves the distance between all pairs of vertices, including their copies under periodicity. That is,
\begin{equation}
(\bu_i - \bu_j)\cdot(\p_i - \p_j-\bm_eL_0) = 0
\label{eqn:fixTorTrivMot}
\end{equation}
for all triples  $\{v_i, v_j;\bm_e\}$, $\bm_e \in \mathbb Z^2$.  For any periodic orbit framework $\pofw$ on $\Tor^2$, there will always be a $2$-dimensional space of  trivial infinitesimal motions of the whole framework, namely the space of infinitesimal translations. Rotation is not a trivial motion for periodic orbit frameworks on $\Tor^2$, since we have fixed our representation of the lattice matrix $L_0$ under rotation. If $m_e = 0$ for all edges $e$ in a periodic orbit graph, then infinitesimal rotation is a solution of the system (\ref {eqn:fixTorTrivMot}) since it preserves the distances between $v_1, \dots, v_n$. However, this is a non-trivial motion, since it does not preserve the length of every hypothetical edge $\{v_i, v_j;\bm_e\}$. 

If the only infinitesimal motions of a framework $\pofw$ on $\Tor^d$ are trivial (i.e. infinitesimal translations), then it is called {\it infinitesimally rigid}. Otherwise, the framework is called {\it infinitesimally flexible}.

\subsubsection{The fixed torus rigidity matrix}

The {\it fixed torus rigidity matrix} $\R_0\pofw$ is the $|E| \times 2|V|$ matrix that records equations for the space of possible infinitesimal motions of the periodic orbit framework $\pofw$. It has one row for each edge $e = \{v_i, v_j; m_e\}$ of $\pog$ as follows:
\[  \renewcommand{\arraystretch}{1.2}
 \bordermatrix{ &   & v_i &   & v_j &    \cr
& 0 \cdots 0 &\p_i - (\p_j + \bm_eL_0) & 0 \cdots 0 & (\p_j  + \bm_eL_0) - \p_i & 0 \cdots 0 \cr
 },\]
where each entry is actually a $2$-tuple, and the non-zero entries occur in the columns corresponding to vertices $v_i$ and $v_j$. The kernel of this matrix is the space of infinitesimal motions of $\pofw$ on $\Tor^2$.

Since a framework on $\Tor^2$ always has a two-dimensional space of trivial motions (translations), it follows that the kernel of the rigidity matrix always has dimension at least $2$. Furthermore, because a framework is infinitesimally rigid on $\Tor^2$ if and only if the only infinitesimal motions are translations, it follows that a periodic orbit framework $\pofw$ is infinitesimally rigid on the fixed torus $\Tor^2$ if and only if the rigidity matrix $\R_0\pofw$ has rank $2|V|-2$ \cite{ThesisPaper1}.

It follows that a periodic orbit framework with $|E| < 2|V| - 2$ cannot be infinitesimally rigid on $\Tor^2$. 

\begin{ex} Consider the periodic orbit graph $\pog$ shown in Figure \ref{fig:gainGraph}. Let $L_0$ be the matrix generating the torus $\Tor^2$. The rigidity matrix $\R_0\pofw$ will have have six rows, and eight columns (two columns corresponding to the two coordinates of each vertex), as follows:

\[ \bordermatrix{
&  v_1  & v_2 & v_3 & v_4 \cr
\{v_1, v_2; (0, 0)\} & \p_1 - \p_2 & \p_2 - \p_1 &   0 &   0 \cr
\{v_2, v_3; (0, 0)\} &   0 & \p_2 - \p_3 & \p_3 - \p_2 &   0 \cr
\{v_3, v_4; (0, 0)\} &   0 &   0 & \p_3 - \p_4 & \p_4 - \p_3 \cr
\{v_1, v_4; (0, 0)\} & \p_1 - \p_4 &   0 &   0 & \p_4 - \p_1 \cr  
\{v_1, v_3; (-1, 0)\} & \p_1 - \p_3 + (1, 0)L_0 &   0 & \p_3 - \p_1 - (1, 0)L_0&   0 \cr  
\{v_1, v_4; (0, 1)\} & \p_1 - \p_4 - (0,1)L_0 &   0 &   0 & \p_4 - \p_1+(0, 1)L_0   
}. \] 

\qed
\label{ex:rigMatrix}
\end{ex}

A collection of edges $E' \subset E$ of the periodic orbit framework $\pofw$ is called {\it independent} (resp. {\it dependent}) if the corresponding rows of the rigidity matrix are linearly independent (resp. linearly dependent).  For example, any loop edge is dependent on $\Tor^2$, and no more than two copies of an edge of $G$ may be independent. 
We may also refer to a framework $\pofw$ as being independent or dependent. We say a framework with $|E| > 2|V| - 2$ is {\it over-counted}, meaning that it is always dependent.

A periodic orbit framework $\pofw$ that is both infinitesimally rigid and independent on $\Tor^2$  will be called {\it minimally rigid}. 
From the rigidity matrix, we obtain the following necessary condition for minimal rigidity on $\Tor^2$. This is an analogue of Maxwell's original (1864) counting condition for the flexibility of frameworks \cite{Maxwell}.
\begin{thm}
Let $\pofw$ be a minimally rigid periodic orbit framework on $\Tor^2$. Then $G$ is $(2,2)$-tight.
\label{thm:perMaxwell}
\end{thm}

The rows of $\R_0\pofw$ corresponding to edges with zero gains are identical to rows in the rigidity matrix of a finite framework, as described in any introduction to rigidity; see \cite{CountingFrameworks} or \cite{SomeMatroids}, for example. Since at most $2|V|-3$ rows can be independent in the finite rigidity matrix, we obtain:
\begin{prop}
Let $\pog$ be a periodic orbit graph with all edges having zero gains,  $\bm = 0$. If $|E| > 2|V| - 3$, then the edges of $\pofw$ are dependent for any realization $\p$. 
\label{prop:zeroGains}
\end{prop}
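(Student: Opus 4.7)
The plan is to reduce the proposition to the classical statement that the finite rigidity matrix in $\mathbb{R}^2$ has rank at most $2|V|-3$, by showing that when $\bm = 0$ the fixed torus rigidity matrix is literally a submatrix of (a copy of) the finite rigidity matrix of the underlying graph viewed as an ordinary multigraph.

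First I would inspect the form of a row of $\R_0\pofw$ corresponding to an edge $e = \{v_i, v_j; \bm_e\}$ in the case $\bm_e = (0,0)$. The $\bm_e L_0$ summand vanishes, so the row becomes
\[
\bigl(\, 0 \,\cdots\, 0,\ \p_i - \p_j,\ 0\,\cdots\,0,\ \p_j - \p_i,\ 0\,\cdots\,0 \,\bigr),
\]
which is exactly the row that would appear in the usual (finite) rigidity matrix $R(G, \p)$ for the multigraph $G$ realized at the configuration $\p$ in $\mathbb{R}^2$. Hence when every gain is zero, $\R_0\pofw$ coincides (row by row) with $R(G,\p)$.

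Next I would invoke the standard fact from finite rigidity theory that every infinitesimal isometry of $\mathbb{R}^2$ (the $2$ translations together with the $1$ infinitesimal rotation about the origin) lies in the kernel of $R(G,\p)$, and these three vectors are linearly independent for any realization with at least two distinct points; hence
\[
\rank R(G, \p) \leq 2|V| - 3.
\]
The standard verification is a direct computation: translations $\bu_i = c$ give zero in each row because $(p_i - p_j)\cdot(c-c) = 0$, and the infinitesimal rotation $\bu_i = \p_i^\perp$ gives $(\p_i - \p_j)\cdot(\p_i^\perp - \p_j^\perp) = 0$ since $x \cdot x^\perp = 0$.

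Finally, if $|E| > 2|V| - 3$, the $|E|$ rows of $\R_0\pofw$ exceed its maximal possible rank $2|V|-3$, so they must be linearly dependent for every $\p$. The only technical caveat concerns loops and parallel edges in the multigraph $G$: a loop $\{v_i, v_i; 0\}$ produces a zero row, and two parallel edges with zero gain produce identical rows, both of which trivially confirm dependence. I expect no serious obstacle in the argument; the only care required is in verifying that the rotational kernel vector lies in the kernel of the fixed torus rigidity matrix precisely when all gains vanish (this is exactly where the argument breaks down for general $\bm$, since the $\bm_e L_0$ term destroys rotational invariance, consistent with the earlier remark that the fixed torus admits only a $2$-dimensional space of trivial motions).
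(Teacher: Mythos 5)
Your proof is correct and follows essentially the same route the paper takes: observe that when $\bm_e = (0,0)$ each row of $\R_0\pofw$ collapses to a row of the ordinary finite-plane rigidity matrix of $G$ at $\p$, and then appeal to the classical bound $\rank R(G,\p) \leq 2|V|-3$. The paper merely cites this rank bound from standard references, whereas you supply the translation/rotation kernel verification; this is a fine and self-contained way to make the same point.
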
 

\subsubsection{The unit torus and affine transformations}
It was shown independently in \cite{periodicFrameworksAndFlexibility} and \cite{myThesis} that the infinitesimal rigidity of periodic orbit frameworks on the fixed torus is invariant under affine transformations. That is, affine transformations preserve the rank of the fixed torus rigidity matrix $\R_0$. When $L_0$ is the $2 \times 2$ identity matrix, we call $\mathbb R^2 / \mathbb Z^2 L_0$ the {\it unit torus}. For the remainder of this paper we assume that $\T_0^2$ is the unit torus, and we drop the ``$L_0$" from the entries of the rigidity matrix (since $m_eL_0 = m_e$). 

%
%

\subsubsection{Generic periodic orbit frameworks}

Since our goal in the remainder of this paper will be to characterize the rigidity  of periodic orbit frameworks based on their periodic orbit graphs, we need a notion of a generic realization on the torus. 

Let $V$ be a finite set of vertices, and let $\p$ be a realization of these vertices on the $d$-dimensional unit torus $\Tor^2 = [0,1)^2$.  Let $k \in \mathbb Z_+$ be given, and let $K$ be the set of all edges between pairs of vertices of $V$ with gains $\bm_e = (m_{e,1}, m_{e,2})$ where $|m_{e,i}| \leq k$ for $i=1, 2$. Then $K$ is the set of all edges with bounded gains.

Consider a set of edges $E \subset K$ such that, for some realization $\p$, the rows of $\R_0$ corresponding to $E$ are independent. By taking the $\p_i$'s as variables, the determinants of the $|E| \times |E|$ submatrices of these rows will either be identically zero or will define an algebraic variety in $\mathbb R^{2|V|}$. The collection of all such varieties, corresponding to all such subsets $E$ will define a closed set of measure zero, as a finite union of closed sets of measure zero. Let this set be denoted $\mathcal X_k$. The complement of $\mathcal X_k$ in $\mathbb R^{2|V|}$ is an open dense set in $\mathbb R^{2|V|}$, and hence its restriction to the subspace of realizations $\p$ of the vertices $V$ on the unit torus, $[0,1)^{2|V|}$ is also open and dense.

Any realization $\p$ of the vertex set $V$ where $\p \notin \mathcal X_k$ will be called {\it k-generic} (recall that $k$ was the upper bound on the gain assignments).  More generally, we may consider realizations of vertex sets that are $k$-generic for any $k$. By the Baire Category Theorem, the countable intersection 
\[\bigcap_{k \in \mathbb Z} \big( \mathbb R^{2|V|} - \mathcal X_k \big)\]
is dense in $\mathbb R^{2|V|}$, as the intersection of open dense sets in the Baire space $\mathbb R^{2|V|}$ \cite{munkres}. We have shown: 
\begin{prop}
The set of all realizations $\p$ of a vertex set such that the rigidity matrix of any generically rigid periodic orbit graph on these vertices attains its maximal rank is dense in $\mathbb R^{2|V|}$. 
\end{prop}
We refer to a realization in this set as simply {\it generic}.
All generic frameworks $\pofw$ with the same underlying periodic orbit graph $\pog$ will have the same rigidity properties, a fact captured by the following result, which is analogous to a similar result for finite frameworks, see for example \cite{SomeMatroids}. 

\begin{lem}[\bf Special Position Lemma] 
Let $\pog$ be a periodic orbit graph, and suppose that for some realization $\p_0$ of $\pog$ on $\Tor^2$ the framework $(\pog, \p_0)$ is infinitesimally rigid. Then for all generic realizations $\p$ of $\pog$ on $\Tor^2$, the framework $\pofw$ is infinitesimally rigid.
\label{lem:specialPosition}
\end{lem}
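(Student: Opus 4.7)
The plan is to reduce the claim to a statement about the rank of the rigidity matrix $\R_0$, and then invoke the definition of generic configurations via the varieties $\mathcal X_k$. By Theorem \ref{thm:fixedMatrixRank}, $(\pog, \p_0)$ being infinitesimally rigid means $\R_0(\pog, \p_0)$ has rank exactly $2|V|-2$. Equivalently, there is some choice of $2|V|-2$ rows of $\R_0$ (indexed by a subset $E' \subseteq E$ with $|E'| = 2|V|-2$) and $2|V|-2$ columns such that the corresponding $(2|V|-2) \times (2|V|-2)$ minor $M$ is nonzero at $\p_0$.

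Since the entries of $\R_0\pofw$ are polynomial (in fact affine linear) in the coordinates of $\p$, the minor $M(\p)$ is a polynomial in $\p \in \mathbb R^{2|V|}$. The fact that $M(\p_0) \neq 0$ shows that $M$ is not the zero polynomial, and hence the zero set $\{\p : M(\p) = 0\}$ is a proper algebraic variety of measure zero in $\mathbb R^{2|V|}$. Next I choose $k$ large enough that $k \geq \max_{e \in E} \max_i |m_{e,i}|$; then $E' \subseteq K$ where $K$ is the set of edges with gains bounded by $k$ used to define $\mathcal X_k$. By construction, $M$ is one of the minors whose (non-identically-zero) vanishing locus is included in $\mathcal X_k$, so $\{M=0\} \subseteq \mathcal X_k$.

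Now let $\p$ be any generic realization in the sense defined before the lemma, so $\p \notin \mathcal X_k$ for every $k$; in particular $\p \notin \mathcal X_k$ for the $k$ above, and hence $M(\p) \neq 0$. Therefore the rigidity matrix $\R_0\pofw$ has rank at least $2|V|-2$ at $\p$. Since the two-dimensional space of infinitesimal translations is always contained in the kernel of $\R_0\pofw$, the rank is at most $2|V|-2$, and we conclude rank$(\R_0\pofw) = 2|V|-2$. Applying Theorem \ref{thm:fixedMatrixRank} once more, $\pofw$ is infinitesimally rigid, as desired.

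The main subtlety, rather than a genuine obstacle, is bookkeeping: one must match the particular minor witnessing rigidity at $\p_0$ with the family of minors used to build $\mathcal X_k$, choosing $k$ large enough to cover the gains in $E'$. Once this is done, the result follows by a standard Zariski-open-set argument, and the Baire Category Theorem role is implicit in the very definition of generic that makes the conclusion apply to a dense set of configurations.
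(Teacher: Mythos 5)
Your proof is correct. The paper states the Special Position Lemma without supplying a proof (it is presented as a consequence of the genericity construction and deferred to standard arguments/the earlier background paper), so there is no paper proof to compare against; your argument is the natural one that the construction of the sets $\mathcal X_k$ is designed to support. Two small points worth noting: (1) the choice of $k$ only needs to dominate the gains appearing on $E$, exactly as you say, and once $k$ is fixed the Baire Category Theorem plays no role in this particular deduction---it only enters in showing that the intersection defining ``generic'' is nonempty and dense; and (2) the upper bound $\rank \R_0\pofw \leq 2|V|-2$ you invoke relies on the two-dimensional space of translations always lying in the kernel, which the paper establishes just before Theorem~\ref{thm:fixedMatrixRank}, so your appeal is well-founded.
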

Since the minimal rigidity of a generic periodic orbit framework $\pofw$ does not depend on the specific realization $\p$, we may say that the periodic orbit framework $\pog$ is {\it generically minimally rigid}.

\subsection{$T$-gain procedure}
The {\it cycle space} of a periodic orbit graph $\pog$ is the cycle space of $G$, denoted $\mathcal C(G)$, which is the vector space generated by the set of all simple cycles of $G$. Elements of  $\mathcal C(G)$ are either simple cycles, or the disjoint union of simple cycles \cite{Diestel}. For any simple cycle $C \in \mathcal C(G)$, we define the {\it net gain} on the cycle $C$ to be the sum of the gains on the edges of the cycle, with sign taken according to the direction of traversal of the edges. We define the {\it gain space} of $\pog$ to be the vector space (over $\mathbb Z$) spanned by the net gains on the cycles of $G$. 

The $T$-gain procedure can be used to easily identify the net gains on the cycles of a periodic orbit graph $\pog$. In particular, the $T$-gain procedure will identify the net gains on a basis for the cycle space of $\pog$, and therefore induces a basis for the gain space of $\pog$. As we will soon see (Section \ref{sec:gainAssignmentsDetermineRigidity}), the rigidity of frameworks on $\Tor^2$ is generically characterized by the net gains on the cycles of the periodic orbit graph. The $T$-gain procedure will thus be an essential tool for the proofs in the rest of the paper. The $T$-gain procedure appears in \cite{TopologicalGraphTheory} for general gain graphs, and it is a specialization of the {\it switching operations} for gain graphs \cite{BiasedGraphsI}. We outline it here for graphs whose gain group is $\mathbb Z^2$. More details can also be found in \cite{ThesisPaper1} or \cite{myThesis}. See Figure \ref{fig:Tvoltage} for a worked example.

\begin{verse}
\begin{figure}[h!]
\begin{center}
\begin{tikzpicture}[->,>=stealth,shorten >=1pt,auto,node distance=2.8cm,thick, font=\footnotesize] 
\tikzstyle{vertex1}=[circle, draw, fill=couch, inner sep=.5pt, minimum width=3.5pt, font=\footnotesize]; 
\tikzstyle{vertex2}=[circle, draw, fill=melon, inner sep=.5pt, minimum width=3.5pt, font=\footnotesize]; 
\tikzstyle{voltage} = [fill=white, inner sep = 0pt,  font=\scriptsize, anchor=center];

	\node[vertex1] (1) at (-1.3,0)  {$1$};
	\node[vertex1] (2) at (1.3,0) {$2$};
	\node[vertex1] (3) at (0, 2) {$3$};
		\draw[thick] (1) -- node[voltage] {$(1,2)$} (2);
		\draw[thick] (2) -- node[voltage] {$(0,1)$} (3);
	\draw[thick] (3) edge  [bend right]  node[voltage] {$(3,1)$} (1);
	\draw[thick] (3) edge  [bend left] node[voltage] {$(1,-1)$} (1);
	
	\node[font=\normalsize] at (0, -1) {(a)};
	
	\pgftransformxshift{4cm};
	
		\node[vertex1] (1) at (-1.3,0)  {$1$};
	\node[vertex1] (2) at (1.3,0) {$2$};
	\node[vertex1] (3) at (0, 2) {$3$};
		\draw[very thick, red] (1) -- node[voltage] {$(1,2)$} (2);
		\draw[thick] (2) -- node[voltage] {$(0,1)$} (3);
	\draw[thick] (3) edge  [bend right]  node[voltage] {$(3,1)$} (1);
	\draw[very thick, red] (3) edge  [bend left] node[voltage] {$(1,-1)$} (1);
	
	\node[blue] at (0, 2.3) {$u$};
	\node[blue] at (-1.6, -.4) {$(1, -1)$};
	\node[blue] at (1.6, -.4) {$(2, 1)$};
	
	\node[font=\normalsize] at (0, -1) {(b)};
	
	\pgftransformxshift{4cm};
	
	\node[vertex1] (1) at (-1.3,0)  {$1$};
	\node[vertex1] (2) at (1.3,0) {$2$};
	\node[vertex1] (3) at (0, 2) {$3$};
		\draw[very thick, red] (1) -- node[voltage] {$(0,0)$} (2);
		\draw[thick] (2) -- node[voltage] {$(2,2)$} (3);
	\draw[thick] (3) edge  [bend right]  node[voltage] {$(4,0)$} (1);
	\draw[very thick, red] (3) edge  [bend left] node[voltage] {$(0,0)$} (1);
	
	\node[font=\normalsize] at (0, -1) {(c)};

\end{tikzpicture}
\caption{A gain graph $\pog$ in (a), with identified tree $T$ (in red), root $u$, and $T$-potentials in (b). The resulting $T$-gain graph $\langle G, \bm_T \rangle$ is shown in (c). The gain space is now seen to be generated by the elements $(4,0)$ and $(2,2)$, hence the gain space is $2\mathbb Z \times 2\mathbb Z$. \label{fig:Tvoltage}}
\end{center}
\end{figure}
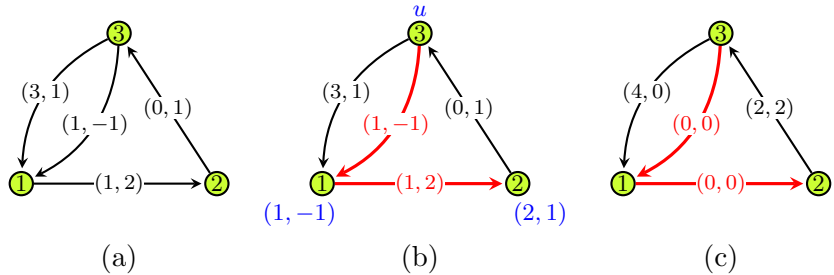\end{verse}

\noindent {\bf $T$-gain Procedure}
\begin{enumerate}
	\item Let $\pog$ be a gain graph, where $G$ is a connected graph. Select an arbitrary spanning tree $T$ of $G$, and choose a vertex $u$ to be the root vertex. 
	\item For every vertex $v$ in $G$, there is a unique path in the tree $T$ from the root $u$ to $v$. Denote the net gain along that path by $\bm(v, T)$, and we call this the {\it $T$-potential} of $v$. \index{T-gain procedure!T-potential} Compute the $T$-potential of every vertex $v$ of $G$. 
	\item Let $e$ be a forward-directed edge of $G$ with initial vertex $v$ and terminal vertex $w$. Define the {\it $T$-gain} of $e$, $\bm_T(e)$ to be 
	$$\bm_T(e) = \bm(v, T) +  \bm(e) - \bm(w, T).$$ 
Compute the $T$-gain of every edge in $G$. Note that the $T$-gain of every edge of the spanning tree will be zero. 
\end{enumerate}

\begin{thm}[\cite{TopologicalGraphTheory}] 
Let $\pog$ be a periodic orbit graph, and let $\tgT$ be the same periodic orbit graph after the $T$-gain procedure. Then $\pog$ and $\tgT$ have the same gain space. 
\end{thm}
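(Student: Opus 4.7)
The plan is to prove the stronger statement that for \emph{every} cycle $C$ in the cycle space $\mathcal{C}(G)$, the net gain under $\bm$ equals the net gain under $\bm_T$. Since the gain space is, by definition, the $\mathbb Z$-span of the net gains on cycles, this immediately implies the equality of gain spaces.

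First, I would fix a simple cycle $C = v_1 \to v_2 \to \cdots \to v_k \to v_1$ in $G$, with edges $e_i$ traversed from $v_i$ to $v_{i+1}$ (indices mod $k$) and with sign $\epsilon_i = \pm 1$ according to whether each $e_i$ is traversed in its plus-direction or its reverse. Under the original labeling, the net gain is
\[
\sum_{i=1}^k \epsilon_i \, \bm(e_i).
\]
Under the $T$-gain labeling, the definition $\bm_T(e) = \bm(v,T) + \bm(e) - \bm(w,T)$ for a plus-directed edge from $v$ to $w$ extends (by invertibility) to give, for a traversal from $v_i$ to $v_{i+1}$,
\[
\epsilon_i \, \bm_T(e_i) = \bm(v_i,T) + \epsilon_i\,\bm(e_i) - \bm(v_{i+1},T).
\]
Summing this around the cycle, the terminal potential $\bm(v_{i+1},T)$ coming from edge $e_i$ cancels with the initial potential $\bm(v_{i+1},T)$ coming from edge $e_{i+1}$, so the telescoping gives
\[
\sum_{i=1}^k \epsilon_i \, \bm_T(e_i) \;=\; \sum_{i=1}^k \epsilon_i \, \bm(e_i).
\]

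Next, I would extend this from simple cycles to arbitrary elements of the cycle space. Any element of $\mathcal{C}(G)$ is a $\mathbb Z$-linear combination (in fact $\mathbb Z/2\mathbb Z$-linear, but lifted with consistent orientations) of simple cycles, and both net-gain functions are linear in the cycle, so linearity together with the simple-cycle case yields the identity for all of $\mathcal{C}(G)$. Since the gain spaces of $\pog$ and $\tgT$ are defined as the $\mathbb Z$-spans of these net gains over $\mathcal{C}(G)$, they coincide as subgroups of $\mathbb Z^2$.

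I do not expect a real obstacle here; the proof is essentially the telescoping identity. The only care needed is the bookkeeping of edge orientations: one must verify that the formula $\bm_T(e) = \bm(v,T) + \bm(e) - \bm(w,T)$ is consistent with the convention $\{v,w;\bm_e\} = \{w,v;-\bm_e\}$, so that reversing an edge negates its $T$-gain just as it negates its original gain. Once that is in place, the telescoping argument goes through verbatim, and one gets the slightly stronger statement that $\bm$ and $\bm_T$ induce identical net gains on every cycle, not merely equal $\mathbb Z$-spans.
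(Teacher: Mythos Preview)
Your proposal is correct and is essentially the same approach as the paper's: the paper's entire proof is the single sentence ``The $T$-gain procedure preserves the net gains on cycles,'' and your telescoping computation is precisely the verification of that sentence. You have simply supplied the details the paper omits.
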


It is also true that the corresponding derived graphs are isomorphic. 
\begin{thm}[\cite{TopologicalGraphTheory}]
Let $\pog$ be a gain graph, let $u$ be any vertex of $G$, and let $T$ be any spanning tree of $G$. Then the derived graph $G^{\bm_T}$ corresponding to $\langle G, \bm_T \rangle$ is isomorphic to the derived graph $G^{\bm}$. 
\label{thm:TGainIsomorphic}
\end{thm}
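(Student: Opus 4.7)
The plan is to construct an explicit graph isomorphism $\phi \colon G^{\bm_T} \to G^{\bm}$ using the $T$-potentials themselves as the ``shift'' parameters. The natural candidate is to translate each lift of $v$ by the $T$-potential of $v$: on vertices, set
\[
\phi(v, a) \;=\; (v,\, a + \bm(v, T)),
\]
for $v \in V$ and $a \in \mathbb{Z}^2$. This is manifestly a bijection on $V \times \mathbb{Z}^2$ (with inverse $(v, b) \mapsto (v, b - \bm(v, T))$), and it fixes the ``fiber'' over each vertex of $G$ setwise, which is exactly the flexibility one should expect from a relabeling induced by a spanning tree.

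Next I would verify that $\phi$ carries edges of $G^{\bm_T}$ bijectively onto edges of $G^{\bm}$. Fix a plus-directed edge $e$ of $G$ with initial vertex $v$, terminal vertex $w$, and original gain $\bm(e)$. By definition the edge $(e, a)$ of $G^{\bm_T}$ connects $(v, a)$ to $(w, a + \bm_T(e))$, where $\bm_T(e) = \bm(v, T) + \bm(e) - \bm(w, T)$. Applying $\phi$ to the endpoints gives
\[
(v,\, a + \bm(v, T)) \quad \text{and} \quad (w,\, a + \bm_T(e) + \bm(w, T)) \;=\; (w,\, (a + \bm(v, T)) + \bm(e)).
\]
Setting $a' = a + \bm(v, T)$, this pair is precisely the endpoints of the edge $(e, a')$ in $G^{\bm}$. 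Hence $\phi$ restricts to a map on edge sets, and the restriction is a bijection: given any edge $(e, b)$ of $G^{\bm}$, its unique preimage is the edge $(e, b - \bm(v, T))$ of $G^{\bm_T}$.

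The only subtlety worth flagging is that the $T$-potentials are well-defined only because $G$ is connected and $T$ is a spanning tree, so each vertex $v$ has a unique $u$-to-$v$ path in $T$; this matches the hypotheses of the $T$-gain procedure. There is no hard technical step here — the argument is essentially a direct unpacking of definitions — but conceptually the result is what validates the $T$-gain procedure as a computational tool: it shows that passing from $\bm$ to $\bm_T$ merely relabels the $\mathbb{Z}^2$-coordinate of each lift by its $T$-potential, without changing the underlying infinite graph up to isomorphism. As a consistency check, for any edge $e$ lying in the spanning tree $T$ we have $\bm_T(e) = 0$, and the computation above correctly identifies the ``horizontal'' edge $(e, a)$ in $G^{\bm_T}$ with the edge $(e, a + \bm(v,T))$ in $G^{\bm}$ that joins lifts of $v$ and $w$ differing by $\bm(e)$, as expected.
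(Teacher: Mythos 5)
Your proof is correct. The paper itself does not reproduce a proof of this statement --- it is cited to the textbook \cite{TopologicalGraphTheory} --- but your argument is the standard one found there: the map $\phi(v,a) = (v, a + \bm(v,T))$ is a fiber-preserving bijection, and the computation $a + \bm_T(e) + \bm(w,T) = (a + \bm(v,T)) + \bm(e)$ shows it carries the edge $(e,a)$ of $G^{\bm_T}$ to the edge $(e, a + \bm(v,T))$ of $G^{\bm}$, giving an isomorphism of derived graphs that respects the covering projection to $G$. Your consistency check on tree edges is also right: for $e \in T$ from $v$ to $w$, $\bm_T(e)=0$ forces $\bm(w,T) = \bm(v,T) + \bm(e)$, so the horizontal edge $(e,a)$ lands exactly where it should. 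One small remark: the computation as written relies on the gain group being abelian (here $\mathbb{Z}^2$), which is all the paper needs; the general statement in Gross--Tucker for non-abelian gain groups requires multiplying by the $T$-potential on a fixed side rather than ``adding,'' but the structure of the argument is identical.
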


We say that the graphs $\pog$ and $\langle G, \bm_T \rangle$ are {\it $T$-gain related} and we write $\pog \sim \langle G, \bm_T \rangle$. More broadly, we say that $\pog$ and $ \langle G, \bm' \rangle$ are {\it $T$-gain equivalent} if $\pog \sim \langle G, \bm_T \rangle$ and $ \langle G, \bm' \rangle \sim \langle G, \bm_T \rangle$ for some choice of spanning tree $T$. In fact, if this is true for one spanning tree, it must be true for all choices of spanning tree, since the $T$-gain procedure preserves the net gains on all cycles. $T$-gain equivalence can easily be shown to be an equivalence relation on the set of all gain assignments on a graph $G$ \cite{ThesisPaper1}.

Most important to the remainder of the paper is the following result, which establishes that $T$-gain equivalent periodic orbit graphs have the same generic rigidity properties. 
\begin{thm}
The periodic orbit graph $\pog$ is generically rigid on $\Tor^2$ if and only if $\tgT$ is generically rigid on $\Tor^2$. 
\label{thm:TgainsPreserveRigidity}
\end{thm}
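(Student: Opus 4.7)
The plan is to reduce the theorem to a direct comparison of the two fixed-torus rigidity matrices and then invoke the Special Position Lemma (Lemma \ref{lem:specialPosition}) to transfer the conclusion from a single configuration to a generic one. The observation I will exploit is that a translation of a representative in $\mathbb{R}^2$ of the position of a vertex by an integer lattice vector changes the gain on each incident edge in exactly the way prescribed by the $T$-gain procedure.

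Concretely, given any configuration $\p$ of $\pog$ on $\Tor^2$, I would introduce the companion configuration $\p'$ for $\tgT$ defined by $\p'_v = \p_v + \bm(v,T)L_0$ for each $v \in V$, where $\bm(v,T)$ is the $T$-potential of $v$. For a plus-directed edge $e=\{v_i, v_j; \bm_e\}$ with $T$-gain $\bm_T(e) = \bm(v_i,T) + \bm_e - \bm(v_j,T)$, the vector appearing in the row of $\R_0\pofwT$ indexed by $e$ at the configuration $\p'$ is
\[
(\p_i + \bm(v_i,T)L_0) - (\p_j + \bm(v_j,T)L_0) - \bm_T(e) L_0 = \p_i - \p_j - \bm_e L_0,
\]
which is precisely the vector appearing in the row of $\R_0\pofw$ indexed by $e$ at the configuration $\p$. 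Since this calculation carries out unchanged in every column block, the matrices $\R_0\pofw$ and $\R_0\pofwT$ are literally equal, and hence have the same rank.

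With this row-by-row identity in hand, the proof is almost immediate. Suppose $\pog$ is generically rigid on $\Tor^2$. Choose a generic configuration $\p$; then $\R_0\pofw$ has rank $2|V|-2$ by Theorem \ref{thm:fixedMatrixRank}, and by the matrix equality $\R_0\pofwT$ at $\p'$ also has rank $2|V|-2$, so $(\tgT,\p')$ is infinitesimally rigid. The Special Position Lemma then promotes this to generic rigidity of $\tgT$. The reverse direction is symmetric (applying the shift $\p \mapsto \p + (-\bm(v,T))L_0$ to a generic configuration of $\tgT$ yields a configuration of $\pog$ at which the two rigidity matrices coincide), and one again appeals to Lemma \ref{lem:specialPosition}.

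The main conceptual obstacle, which is the reason Lemma \ref{lem:specialPosition} is indispensable, is that the shift $\p \mapsto \p'$ does not in general preserve the standard fundamental domain $[0,1)^{2|V|}$ nor the exceptional set $\mathcal{X}_k$ used to define genericity for either gain assignment, so one cannot simply assert that a generic configuration for $\pog$ produces a generic configuration for $\tgT$. What one does have, however, is that the infinitesimal rigidity of a single framework suffices to conclude generic rigidity of the underlying periodic orbit graph, and that is exactly what the row-by-row identity above provides. All other ingredients are routine: the linearity of the shift, the fact that $\bm(v,T) \in \mathbb{Z}^2$ so that $\p$ and $\p'$ descend to well-defined configurations on $\Tor^2$, and the symmetry of the argument in the two directions.
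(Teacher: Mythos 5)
The paper states this theorem without proof, citing \cite{ThesisPaper1}; the argument given there passes through the isomorphism of derived graphs (Theorem~\ref{thm:TGainIsomorphic}). Your proof is a direct, matrix-level realization of the same idea, and it is correct: the shift $\p'_v = \p_v + \bm(v,T)L_0$ is exactly the change of representative that turns the relabeling of the derived graph $G^{\bm} \cong G^{\bm_T}$ into a literal equality of rigidity matrices, $\R_0\pofw = \R_0\pofwT$ evaluated at $\p'$. Your sign bookkeeping checks out: $(\p_i + \bm(v_i,T)L_0) - (\p_j + \bm(v_j,T)L_0) - \bm_T(e)L_0 = \p_i - \p_j - \bm_e L_0$ since $\bm_T(e) = \bm(v_i,T) + \bm_e - \bm(v_j,T)$. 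You also correctly recognize that the exceptional sets $\mathcal{X}_k$ for the two gain assignments need not coincide, so that the Special Position Lemma (Lemma~\ref{lem:specialPosition}) is genuinely needed to pass from a single rigid realization to generic rigidity, and that the argument is symmetric in the two directions. The one point worth stating more explicitly is that $\R_0$ is computed from a lift $\p: V \to \mathbb{R}^2$ rather than from the literal points of $[0,1)^2$, and that the lift $\p'$, although it represents the same torus configuration as $\p$, is the lift one must use to obtain the matrix identity; this is precisely what the derived-graph isomorphism is tracking. With that reading, the proof is complete. Compared to the route through Theorem~\ref{thm:TGainIsomorphic}, your computation is more elementary and self-contained; the derived-graph route buys a coordinate-free statement that does not mention the rigidity matrix and applies uniformly to any property of the periodic framework that is invariant under orbit relabeling.
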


Theorem \ref{thm:TgainsPreserveRigidity} is proved in \cite{ThesisPaper1}, so we only sketch the key ideas here.  It is shown that, for a generic position $p$, $\rank R_0(\pog, p) = \rank R_0(\langle G, m_T \rangle, p')$, where $p': V \rightarrow \mathbb R^2$ is defined by $p_i' = p_i + m_T(v_i)$.  The basic idea is that the $T$-gain procedure changes the representatives of the vertices used in the rigidity matrix, which, together with the new gains, leaves the rigidity matrix (and its rank) unchanged. We then conclude the generic result using the Special Position Lemma (Lemma \ref{lem:specialPosition}).

\section{Generating minimally rigid frameworks on $\Tor^2$}
\label{sec:generatingIsostaticFrameworks}
We now describe inductive methods for generating infinitesimally rigid frameworks on the fixed torus $\Tor^2$.

\subsection{Inductive constructions}
\label{sec:inductiveConstructions}

Let $\pofw$ be an infinitesimally rigid periodic orbit framework on $\Tor^2$. It is possible to construct other infinitesimally rigid frameworks from $\pofw$ using {\it gain-preserving vertex additions} and {\it edge splits}. We present here the details of these inductive constructions, which we will also call {\it gain-preserving Henneberg moves} after their finite counterparts which were developed by Henneberg \cite{Henneberg}. The proofs of Propositions \ref{prop:vertexAddition}, \ref{prop:edgeSplit} and \ref{prop:reverseEdgeSplit} follow the basic method of their finite counterparts shown in \cite{SomeMatroids}. Analogous statements for the partially flexible torus appear in \cite{NixonRoss} using the same proof methods.

\subsubsection{Gain-preserving vertex addition}
\begin{verse}
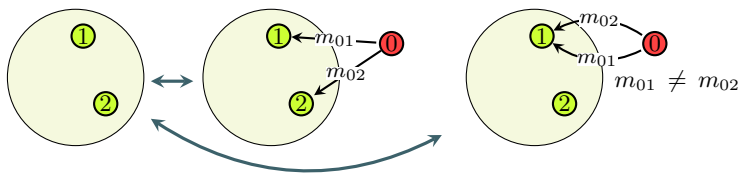
\begin{figure}[h!]
\begin{center}
\begin{tikzpicture}[->,>=stealth,shorten >=1pt,auto,node distance=2.8cm,thick, font=\footnotesize] 
\tikzstyle{vertex1}=[circle, draw, fill=couch, inner sep=.5pt, minimum width=3.5pt, font=\footnotesize]; 
\tikzstyle{vertex2}=[circle, draw, fill=melon, inner sep=.5pt, minimum width=3.5pt, font=\footnotesize]; 
\tikzstyle{voltage} = [fill=white, inner sep = 0pt,  font=\scriptsize, anchor=center];

	\draw (0.1,0.05) circle (.9cm); 
	\fill[cloud] (0.1,0.05) circle (.9cm);
		
	\node[vertex1] (1) at (.2,.6)  {$1$};
	\node[vertex1] (2) at (.5,-.3) {$2$};
	\path[<->, bluey, very thick] (1.1, -0.5) edge [bend right] (5, -.7);
	\pgftransformxshift{1.3cm}
	\draw[<->, very thick, bluey] (-.2, 0) -- (.4, 0);
	\pgftransformxshift{1.3cm}

	\draw (0.1,0.05) circle (.9cm); 
	\fill[cloud] (0.1,0.05) circle (.9cm);
		
	\node[vertex1] (1) at (.2,.6)  {$1$};
	\node[vertex1] (2) at (.5,-.3) {$2$};
	\node[vertex2] (3) at (1.7, .5) {$0$};

	\draw[thick, <-] (1) --  node[voltage] {$\bm_{01}$} (3);
		\draw[thick] (3) -- node[voltage] {$\bm_{02}$} (2);
					
	\pgftransformxshift{3.5cm}
	\draw (0.1,0.05) circle (.9cm); 
	\fill[cloud] (0.1,0.05) circle (.9cm);
		
	\node[vertex1] (1) at (.2,.6)  {$1$};
	\node[vertex1] (2) at (.5,-.3) {$2$};
		\node[vertex2] (3) at (1.7, .5) {$0$};

     \draw[thick, <-] (1) edge [bend right] node[voltage] {$\bm_{01}$} (3);
     \draw[thick,<-] (1) edge [bend left] node[voltage] {$\bm_{02}$} (3);

	\node[text width=2.5cm, text centered] at (2,0) {$\bm_{01} \neq \bm_{02}$};

\end{tikzpicture} 
\caption{Gain-preserving vertex addition. The large circular region represents a generically rigid periodic orbit graph.  \label{fig:vertexAddition}}
\end{center}
\end{figure}\end{verse}

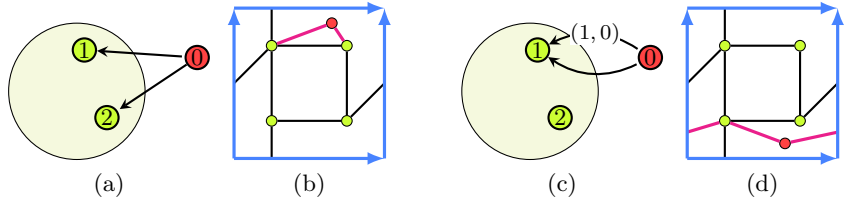
\begin{figure}\begin{center}
\subfloat[]{\begin{tikzpicture}[->,>=stealth,shorten >=1pt,auto,node distance=2.8cm,thick, font=\footnotesize] 
\tikzstyle{vertex1}=[circle, draw, fill=couch, inner sep=.5pt, minimum width=3.5pt, font=\footnotesize]; 
\tikzstyle{vertex2}=[circle, draw, fill=melon, inner sep=.5pt, minimum width=3.5pt, font=\footnotesize]; 
\tikzstyle{gain} = [fill=white, inner sep = 0pt,  font=\scriptsize, anchor=center];

	\draw (0.1,0.05) circle (.9cm); 
	\fill[cloud] (0.1,0.05) circle (.9cm);
		
	\node[vertex1] (1) at (.2,.6)  {$1$};
	\node[vertex1] (2) at (.5,-.3) {$2$};
	\node[vertex2] (3) at (1.7, .5) {$0$};

	\draw[thick, <-] (1) --   (3);
		\draw[thick] (3) --  (2);
\end{tikzpicture}}\hspace{.3cm}%
\subfloat[]{\begin{tikzpicture}[auto, node distance=2cm]
\tikzstyle{vertex1}=[circle, draw, fill=couch, inner sep=1pt, minimum width=3pt]; 
\tikzstyle{vertex2}=[circle, draw, fill=lips, inner sep=1pt, minimum width=3pt]; 
\tikzstyle{vertex3}=[circle, draw, fill=melon, inner sep=1pt, minimum width=3pt]; 
\tikzstyle{vertex4}=[circle, draw, fill=bluey, inner sep=1pt, minimum width=3pt]; 

\node[vertex1, minimum width=4pt] (1) at (0,0) {};
\node[vertex1, minimum width=4pt] (2) at (1,0) {};
\node[vertex1, minimum width=4pt] (3) at (1,-1) {};
\node[vertex1, minimum width=4pt] (4) at (0,-1) {};

\draw[thick] (1) -- (2) -- (3) -- (4) -- (1) ;
\draw[thick] (1) -- (0, .5);
\draw[thick] (4) -- (0, -1.5);
\draw[thick] (3) -- (1.5, -.5);
\draw[thick] (1) -- (-.5, -.5);

\node[vertex3, minimum width=4pt] (5) at (.8, .3) {};
\draw[very thick, pink] (5) -- (1) (5) -- (2);

\pgfsetarrowsend{latex} 
	\draw[sky, very thick] (-.5, -1.5) -- (1.5, -1.5); 
	\draw[sky, very thick] (-.5, .5) -- (1.5, .5);
	\draw[sky, very thick] (-.5, -1.5) -- (-.5, .5);
	\draw[sky, very thick] (1.5, -1.5) -- (1.5, .5);
\pgfsetarrowsend{} 
\end{tikzpicture}}\hspace{1cm}%
\subfloat[]{\begin{tikzpicture}[->,>=stealth,shorten >=1pt,auto,node distance=2.8cm,thick, font=\footnotesize] 
\tikzstyle{vertex1}=[circle, draw, fill=couch, inner sep=.5pt, minimum width=3.5pt, font=\footnotesize]; 
\tikzstyle{vertex2}=[circle, draw, fill=melon, inner sep=.5pt, minimum width=3.5pt, font=\footnotesize]; 
\tikzstyle{gain} = [fill=white, inner sep = 0pt,  font=\scriptsize, anchor=center];
	\draw (0.1,0.05) circle (.9cm); 
	\fill[cloud] (0.1,0.05) circle (.9cm);
		
	\node[vertex1] (1) at (.2,.6)  {$1$};
	\node[vertex1] (2) at (.5,-.3) {$2$};
		\node[vertex2] (3) at (1.7, .5) {$0$};

     \draw[thick, <-] (1) edge [bend right]  (3);
     \draw[thick,<-] (1) edge [bend left] node[gain] {$(1,0)$} (3);

\end{tikzpicture}}\hspace{.3cm}%
\subfloat[]{\begin{tikzpicture}[auto, node distance=2cm]
\tikzstyle{vertex1}=[circle, draw, fill=couch, inner sep=1pt, minimum width=3pt]; 
\tikzstyle{vertex2}=[circle, draw, fill=lips, inner sep=1pt, minimum width=3pt]; 
\tikzstyle{vertex3}=[circle, draw, fill=melon, inner sep=1pt, minimum width=3pt]; 
\tikzstyle{vertex4}=[circle, draw, fill=bluey, inner sep=1pt, minimum width=3pt]; 

\node[vertex1, minimum width=4pt] (1) at (0,0) {};
\node[vertex1, minimum width=4pt] (2) at (1,0) {};
\node[vertex1, minimum width=4pt] (3) at (1,-1) {};
\node[vertex1, minimum width=4pt] (4) at (0,-1) {};

\draw[thick] (1) -- (2) -- (3) -- (4) -- (1) ;
\draw[thick] (1) -- (0, .5);
\draw[thick] (4) -- (0, -1.5);
\draw[thick] (3) -- (1.5, -.5);
\draw[thick] (1) -- (-.5, -.5);

\node[vertex3, minimum width=4pt] (5) at (.8, -1.3) {};
\draw[very thick, pink] (5) -- (4) (5) -- (1.5, -1.15) (-.5, -1.15) -- (4);

\pgfsetarrowsend{latex} 
	\draw[sky, very thick] (-.5, -1.5) -- (1.5, -1.5); 
	\draw[sky, very thick] (-.5, .5) -- (1.5, .5);
	\draw[sky, very thick] (-.5, -1.5) -- (-.5, .5);
	\draw[sky, very thick] (1.5, -1.5) -- (1.5, .5);
\pgfsetarrowsend{} 
\end{tikzpicture}}
\caption{Gain-preserving vertex addition on the torus. Parts (a) and (b) denote examples of vertex addition in the case that $v_0$ is adjacent to two distinct vertices. In (c) and (d), $v_0$ is adjacent to only one vertex in $\pog$. \label{fig:vertexAdditionTorus}}
\end{center}
\end{figure}

Given a periodic orbit graph $\pog$, a {\it (gain-preserving) vertex addition} is the addition of a single new vertex $v_0$ to $V=V(G) = V\pog$, and the edges $\{v_0, v_{i_1}; \bm_{01}\}$ and $\{v_0, v_{i_2}; \bm_{02}\}$ to $E\pog$, such that $\bm_{01} \neq \bm_{02}$ whenever $v_{i_1} = v_{i_2}$ (see Figure \ref{fig:vertexAddition}). Provided that $v_{i_1} \neq v_{i_2}$, by definition, $\bm_{01}$ and $\bm_{02}$ may always taken to be $(0,0)$, since this is simply the usual finite vertex addition. Examples are shown in Figure \ref{fig:vertexAdditionTorus}. 

\begin{prop}[Gain-preserving vertex addition]
Let $\pog$ be a periodic orbit graph with $n$ vertices, and let $\langle G', \bm' \rangle$ be the graph created by performing a vertex addition on $\pog$, adding the vertex $v_0$ to $G$. Let $\p$ be a set of $n$ generic points, and let $\p'$ be a set of $n+1$ generic points, agreeing with $\p$ on the first $n$. Then $\pofw$ is independent if and only if $\pofwpr$ is independent.
\label{prop:vertexAddition}
\end{prop}

\begin{proof}
 Suppose that the vertex addition adds $v_0$ and the edges $\{v_0, v_{i_1}; \bm_{01}\}$ and $\{v_0, v_{i_2}; \bm_{02}\}$, where $v_{i_1}$ and $v_{i_2} \in V$ may or may not be the same vertex. The rigidity matrix of $\pogp$ is %
\[\R_0\pofwpr = \bordermatrix{
 &  v_0 & | & v_1 & \cdots & v_{|V|} \cr
\hfill e_1 \ & 0  & | & &  & \cr
\hfill \vdots \ & \vdots & | & & \R_0\pofw &  \cr
\hfill e_{|E|} \ & 0 & | & & &  \cr
\hline \cr
\{v_0, v_{i_1}; \bm_{01}\} & \p_0 - \p_{i_1} - \bm_{01} &| & & \cdots & \cr
\{v_0, v_{i_2}; \bm_{02}\} & \p_0 - \p_{i_2} - \bm_{02} &|  & & \cdots & }.
 \]
%
%
Toward a contradiction, suppose that the rows of $\R_0\pofwpr$ are dependent. Then the columns of $\R_0\pofwpr$ corresponding to $v_0$ provide the relationship:
\[\omega_{01}(\p_0 - \p_{i_1} - \bm_{01}) + \omega_{02}(\p_0 - \p_{i_2} - \bm_{02}) = 0 \]
for some $\omega_{01}, \omega_{02} \in \mathbb R$. The vectors $(\p_0 - \p_{i_1} - \bm_{01})$ and $(\p_0 - \p_{i_2} - \bm_{02})$ are linearly independent  if and only if the points $p_0$, $p_{i_1} + \bm_{01}$ and $p_{i_2} + \bm_{02}$ are {\it not} collinear. It follows from the definition of generic realizations that if $p'$ is generic, then no three points of the form $p_1+m_1$, $p_2+m_2$, $p_3+m_3$ are collinear (the three rows of the rigidity matrix corresponding to the edges of the triangle formed by connecting these points are independent). 

Hence $\omega_{01} = \omega_{02} = 0$, which creates a dependence among the rows of $\R_0\pofw$, contradicting our assumption that the rows of $\R_0\pofw$ were independent. The argument reverses for the converse. (Assume the rows of $\R\pofwpr$ are dependent and proceed from there.) %
\end{proof}

Note that Proposition \ref{prop:vertexAddition} also has a geometric meaning. In fact, the proof of that result was geometric in nature, in the sense that we chose $p$ so that the points $p_0$, $p_{i_1} + \bm_{01}$ and $p_{i_2} + \bm_{02}$ were not collinear in $\mathbb R^2$ (in fact we chose $p$ to be generic, and the non-collinearity followed).

\subsubsection{Gain-preserving edge splitting}
Let $\pog$ be a periodic orbit graph, and let $e = \{v_{i_1}, v_{i_2}; \bm_e\}$ be an edge of $\pog$. A {\it gain-preserving edge split} $\pogp$ of $\pog$ is a graph with vertex set $V \cup \{v_0\}$ and edge set consisting of all of the edges of $E\pog$ except $e$, together with the edges \[ \{v_0, v_{i_1}; (0,0)\}, \{v_0, v_{i_2}; \bm_e\}, \{v_0, v_{i_3}; \bm_{03}\}\]
where $v_{i_1} \neq v_{i_2}, v_{i_3}$, and $\bm_{03} \neq \bm_{e}$ if $v_{i_2}=v_{i_3}$ (see Figure \ref{fig:edgeSplit}).

\begin{verse}
\begin{figure}[h!]
\begin{center}
\begin{tikzpicture}[->,>=stealth,shorten >=1pt,auto,node distance=2.8cm,thick, font=\footnotesize] 
\tikzstyle{vertex1}=[circle, draw, fill=couch, inner sep=.5pt, minimum width=3.5pt, font=\footnotesize]; 
\tikzstyle{vertex2}=[circle, draw, fill=melon, inner sep=.5pt, minimum width=3.5pt, font=\footnotesize]; 
\tikzstyle{voltage} = [fill=white, inner sep = 0pt,  font=\scriptsize, anchor=center];

	\draw (0.1,0.05) circle (.9cm); 
	\fill[cloud] (0.1,0.05) circle (.9cm);
		
	\node[vertex1] (1) at (.2,.7)  {$1$};
	\node[vertex1] (2) at (.6,-.4) {$2$};
	   \node[vertex1] (4) at (-.4, -.1) {$3$};
	
	\draw[thick] (1) -- node[voltage, fill=cloud] {$\bm_e$} (2);
	\path[bluey, very thick] (1.1, -0.5) edge [bend right] (5.6, -.7);
	\pgftransformxshift{1.5cm}
	\draw[very thick, bluey] (-.2, 0) -- (.4, 0);
	\pgftransformxshift{1.5cm}

	\draw (0.1,0.05) circle (.9cm); 
	\fill[cloud] (0.1,0.05) circle (.9cm);
		
	\node[vertex1] (1) at (.2,.7)  {$1$};
	\node[vertex1] (2) at (.6,-.4) {$2$};
	   \node[vertex1] (4) at (-.4, -.1) {$3$};
	\node[vertex2] (3) at (1.7, .5) {$0$};

	\draw[thick] (1) --  node[voltage] {$(0,0)$} (3);
		\draw[thick] (3) -- node[voltage] {$\bm_e$} (2);
		\draw[thick] (3) -- node[voltage] {$\bm_{03}$} (4);
		
	\pgftransformxshift{3.5cm}
	\draw (0.1,0.05) circle (.9cm); 
	\fill[cloud] (0.1,0.05) circle (.9cm);
		
	\node[vertex1] (1) at (.2,.7)  {$1$};
	\node[vertex1] (2) at (.6,-.4) {$2$};
	   \node[vertex1] (4) at (-.4, -.1) {$3$};
	\node[vertex2] (3) at (1.7, .5) {$0$};

	\draw[thick] (1) -- node[voltage] {$(0,0)$} (3);
	\draw[thick] (3) edge [bend right]  node[voltage] {$\bm_{03}$} (2);
	\draw[thick] (3) edge [bend left] node[voltage] {$\bm_{e}$} (2);

	\node[text width=2.5cm, text centered] at (2.5,-0.75) { $\bm_{03} \neq \bm_{e}$};
\end{tikzpicture}
\caption{Gain-preserving edge split. \label{fig:edgeSplit}}
\end{center}
\end{figure}
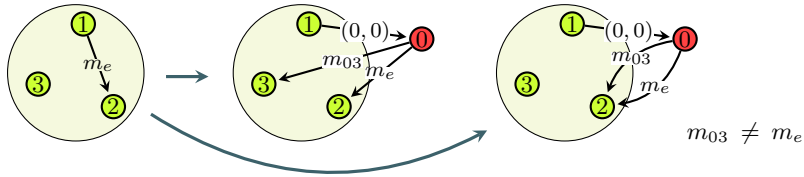\end{verse}

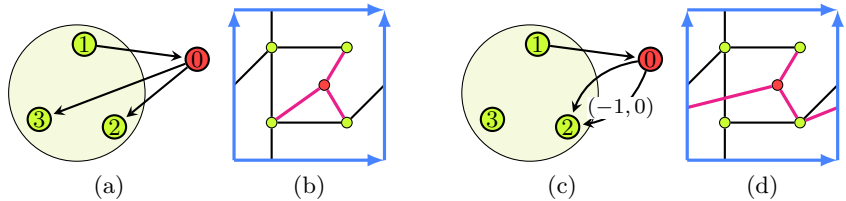
\begin{figure}\begin{center}
\subfloat[]{\begin{tikzpicture}[->,>=stealth,shorten >=1pt,auto,node distance=2.8cm,thick, font=\footnotesize] 
\tikzstyle{vertex1}=[circle, draw, fill=couch, inner sep=.5pt, minimum width=3.5pt, font=\footnotesize]; 
\tikzstyle{vertex2}=[circle, draw, fill=melon, inner sep=.5pt, minimum width=3.5pt, font=\footnotesize]; 
\tikzstyle{gain} = [fill=white, inner sep = 0pt,  font=\scriptsize, anchor=center];
	\draw (0.1,0.05) circle (.9cm); 
	\fill[cloud] (0.1,0.05) circle (.9cm);
		
	\node[vertex1] (1) at (.2,.7)  {$1$};
	\node[vertex1] (2) at (.6,-.4) {$2$};
	   \node[vertex1] (4) at (-.4, -.3) {$3$};
	\node[vertex2] (3) at (1.7, .5) {$0$};

	\draw[thick] (1) --   (3);
		\draw[thick] (3) --  (2);
		\draw[thick] (3) -- (4);
		
\end{tikzpicture}}\hspace{.3cm}%
\subfloat[]{\begin{tikzpicture}[auto, node distance=2cm]
\tikzstyle{vertex1}=[circle, draw, fill=couch, inner sep=1pt, minimum width=3pt]; 
\tikzstyle{vertex2}=[circle, draw, fill=lips, inner sep=1pt, minimum width=3pt]; 
\tikzstyle{vertex3}=[circle, draw, fill=melon, inner sep=1pt, minimum width=3pt]; 
\tikzstyle{vertex4}=[circle, draw, fill=bluey, inner sep=1pt, minimum width=3pt]; 

\node[vertex1, minimum width=4pt] (1) at (0,0) {};
\node[vertex1, minimum width=4pt] (2) at (1,0) {};
\node[vertex1, minimum width=4pt] (3) at (1,-1) {};
\node[vertex1, minimum width=4pt] (4) at (0,-1) {};

\draw[thick] (1) -- (2)  (3) -- (4) -- (1) ;
\draw[thick] (1) -- (0, .5);
\draw[thick] (4) -- (0, -1.5);
\draw[thick] (3) -- (1.5, -.5);
\draw[thick] (1) -- (-.5, -.5);

\node[vertex3, minimum width=4pt] (5) at (.7, -.5) {};
\draw[very thick, pink] (5) -- (4) (5) -- (2) (5) -- (3);

\pgfsetarrowsend{latex} 
	\draw[sky, very thick] (-.5, -1.5) -- (1.5, -1.5); 
	\draw[sky, very thick] (-.5, .5) -- (1.5, .5);
	\draw[sky, very thick] (-.5, -1.5) -- (-.5, .5);
	\draw[sky, very thick] (1.5, -1.5) -- (1.5, .5);
\pgfsetarrowsend{} 
\end{tikzpicture}}\hspace{1cm}%
\subfloat[]{\begin{tikzpicture}[->,>=stealth,shorten >=1pt,auto,node distance=2.8cm,thick, font=\footnotesize] 
\tikzstyle{vertex1}=[circle, draw, fill=couch, inner sep=.5pt, minimum width=3.5pt, font=\footnotesize]; 
\tikzstyle{vertex2}=[circle, draw, fill=melon, inner sep=.5pt, minimum width=3.5pt, font=\footnotesize]; 
\tikzstyle{gain} = [fill=white, inner sep = 0pt,  font=\scriptsize, anchor=center];
	\draw (0.1,0.05) circle (.9cm); 
	\fill[cloud] (0.1,0.05) circle (.9cm);
		
	\node[vertex1] (1) at (.2,.7)  {$1$};
	\node[vertex1] (2) at (.6,-.4) {$2$};
	   \node[vertex1] (4) at (-.4, -.3) {$3$};
	\node[vertex2] (3) at (1.7, .5) {$0$};

	\draw[thick] (1) --  (3);
	\draw[thick] (3) edge [bend right]  (2);
	\draw[thick] (3) edge [bend left] node[gain] {$(-1,0)$} (2);

\end{tikzpicture}}\hspace{.3cm}%
\subfloat[]{\begin{tikzpicture}[auto, node distance=2cm]
\tikzstyle{vertex1}=[circle, draw, fill=couch, inner sep=1pt, minimum width=3pt]; 
\tikzstyle{vertex2}=[circle, draw, fill=lips, inner sep=1pt, minimum width=3pt]; 
\tikzstyle{vertex3}=[circle, draw, fill=melon, inner sep=1pt, minimum width=3pt]; 
\tikzstyle{vertex4}=[circle, draw, fill=bluey, inner sep=1pt, minimum width=3pt]; 

\node[vertex1, minimum width=4pt] (1) at (0,0) {};
\node[vertex1, minimum width=4pt] (2) at (1,0) {};
\node[vertex1, minimum width=4pt] (3) at (1,-1) {};
\node[vertex1, minimum width=4pt] (4) at (0,-1) {};

\draw[thick] (1) -- (2)  (3) -- (4) -- (1) ;
\draw[thick] (1) -- (0, .5);
\draw[thick] (4) -- (0, -1.5);
\draw[thick] (3) -- (1.5, -.5);
\draw[thick] (1) -- (-.5, -.5);

\node[vertex3, minimum width=4pt] (5) at (.7, -.5) {};
\draw[very thick, pink]  (5) -- (2) (5) -- (3) (5) -- (-.5, -.8) (1.5, -.8) -- (3);

\pgfsetarrowsend{latex} 
	\draw[sky, very thick] (-.5, -1.5) -- (1.5, -1.5); 
	\draw[sky, very thick] (-.5, .5) -- (1.5, .5);
	\draw[sky, very thick] (-.5, -1.5) -- (-.5, .5);
	\draw[sky, very thick] (1.5, -1.5) -- (1.5, .5);
\pgfsetarrowsend{} 
\end{tikzpicture}}
\caption{Gain-preserving edge splits on the torus. Parts (a) and (b) depict the case where $v_0$ is adjacent to three distinct vertices in $\pog$, while (c) and (d) illustrate the case of only two distinct neighbours.  \label{fig:edgeSplitTorus}}
\end{center}
\end{figure}

Gain-preserving edge splits, and reverse gain-preserving edge splits,  preserve infinitesimal rigidity. We will show this in two parts, by first showing that the gain-preserving edge split preserves independence of the rows of the rigidity matrix. 

\begin{prop}[Gain-preserving Edge Split]
Let $\pog$ be a periodic orbit graph with $n$ vertices, and let $\pogp$ be an edge split of it. Let $\p$ be a set of $n$ generic points, and let $\p'$ be a set of $n+1$ generic points, agreeing with $\p$ on the first $n$. If $\pofw$ is independent, then $\pofwpr$ is independent too.
\label{prop:edgeSplit}
\end{prop}

\begin{proof}

Suppose that $\p$ is a generic realization of the vertices of $G$ on $\Tor^2$, with no vertex on the boundary of $\Tor^2$, and place $\p_0$ on the edge connecting the points $p_{i_1}$ and $p_{i_2}+m_e$, where the segment containing $p_{i1}$ and $p_0$ lies in $[0,1)^2$.  Without loss of generality, suppose that $e_1 = \{v_{i_1}, v_{i_2}; \bm_e\}$ is the split edge.  Let $\R_0\pofw - e_1$ denote the rigidity matrix of $\pofw$ without the row corresponding to the edge $e_1$. The rigidity matrix $\R_0\pofwpr$ becomes:
\[  \bordermatrix{
 &  v_0  & & v_1 & v_2 &\cdots & v_{|V|} \cr
\hfill e_2 \  & 0 &&  & &  & \cr 
\hfill \vdots \  & \vdots &&  & \R_0\pofw - e_1 & &  \cr 
\hfill e_{|E|} \ & 0 &&  & & & \cr \hline \cr
\hfill \{v_0, v_{i_1}; 0\} & \p_0 - \p_{i_1} && \p_{i_1} - \p_0 &  0 & \cdots & \cr 
\hfill \{v_0, v_{i_2}; \bm_e\} & \p_0 - \p_{i_2} - \bm_e && 0  & \p_{i_2} - \p_0 + \bm_e & \cdots  & \cr
\{v_0, v_{i_3}; \bm_{03}\} & \p_0 - \p_{i_3} - \bm_{03} & &0  &  &  \cdots & 
}. \]
%
%
Toward a contradiction, suppose that there is a non-trivial dependence among the rows of $\R_0\pofwpr$. That is, suppose that there exists $\omega \neq 0$ where
$\omega = [\begin{array}{cccccc}  \omega_2 & \cdots & \omega_{|E|} & \omega_{01} & \omega_{02} & \omega_{03} \end{array}]$ such that
\[\omega \cdot \R_0\pofwpr = 0.\]

The vector equation describing the first two columns of this expression (the columns corresponding to $v_0$) becomes:
\[\omega_{01}(\p_0 - \p_{i_1} ) + \omega_{02}(\p_0 - \p_{i_2} - \bm_e) + \omega_{03}(\p_0 - \p_{i_3} - \bm_{03}) = 0.\]
Not all of $\omega_{01}, \omega_{02}, \omega_{03}$ can be $0$, otherwise we would immediately have a nontrivial dependence among the rows of $\R_0\pofw$, contradicting our hypothesis.  

As a consequence of placing $\p_0$ along the edge connecting $p_{i_1}$ and $p_{i_2} +m_e$, the vectors $(\p_0 - \p_{i_1})$ and $(\p_0 - \p_{i_2} - \bm_e)$ are parallel.  However, $(\p_0 - \p_{i_3} - \bm_{03})$ is in a distinct direction, and therefore $\omega_{03} = 0$. %
Since both of these vectors are again parallel to the deleted edge, we have
\[\omega_{01}(\p_0 - \p_{i_1} ) =  -\omega_{02}(\p_0 - \p_{i_2} - \bm_e) = \omega_{12}(\p_{i_1} - \p_{i_2} - \bm_e)\]
for some scalar $\omega_{12} \neq 0$. But then the coefficients of the rows of $\R_0\pofwpr$ corresponding to the edges in $E \cap E'$, together with $\omega_{12}$, provide a dependence among the rows of $\R_0\pofw$, which contradicts our hypothesis.

By the Special Position Lemma (Lemma \ref{lem:specialPosition}),  we conclude that the edges of $(\pogp, \p')$ are generically independent, since the edges are independent for a special position of $\p_0$. %
\end{proof}

The {\it reverse gain-preserving edge split}  will delete a 3-valent vertex, and add an edge between two of the vertices formerly adjacent to that vertex (Figure \ref{fig:reverseEdgeSplit}). In particular, if $v_0$ is the 3-valent vertex incident to the edges \[\{v_0, v_{i_1}; m_{01}\}, \{v_0, v_{i_2}; m_{02}\}, \{v_0, v_{i_3}; m_{03}\},\]
where at most two of $v_{i_1}, v_{i_2}$ and $v_{i_3}$ may be the same, then a reverse edge split will add one of the edges 
\[\{v_{i_1}, v_{i_2}; \bm_{02} - \bm_{01}\}, \{v_{i_2}, v_{i_3}; \bm_{03} - \bm_{02}\}, \{v_{i_3}, v_{i_1}; \bm_{01} - \bm_{03}\}.\]

\begin{verse}
\begin{figure}[h!]
\begin{center}
\begin{tikzpicture}[->,>=stealth,shorten >=1pt,auto,node distance=2.8cm,thick, font=\footnotesize] 
\tikzstyle{vertex1}=[circle, draw, fill=couch, inner sep=.5pt, minimum width=3.5pt, font=\footnotesize]; 
\tikzstyle{vertex2}=[circle, draw, fill=melon, inner sep=.5pt, minimum width=3.5pt, font=\footnotesize]; 
\tikzstyle{voltage} = [fill=white, inner sep = 0pt,  font=\scriptsize, anchor=center];

	\draw (0.1,0.05) circle (.9cm); 
	\fill[cloud] (0.1,0.05) circle (.9cm);
		
	\node[vertex1] (1) at (.2,.7)  {$1$};
	\node[vertex1] (2) at (.6,-.4) {$2$};
	   \node[vertex1] (4) at (-.4, -.1) {$3$};
	\node[vertex2] (3) at (1.7, .5) {$0$};

	\draw[thick] (3) --  node[voltage] {$\bm_{01}$} (1);
		\draw[thick] (3) -- node[voltage] {$\bm_{02}$} (2);
		\draw[thick] (3) -- node[voltage] {$\bm_{03}$} (4);

	\pgftransformxshift{2cm}
	\draw[very thick, bluey] (-.3, 0) -- (1, 0);
	\pgftransformxshift{2cm}

	\draw (0.1,0.05) circle (.9cm); 
	\fill[cloud] (0.1,0.05) circle (.9cm);
		
	\node[vertex1] (1) at (.2,.7)  {$1$};
	\node[vertex1] (2) at (.6,-.4) {$2$};
	   \node[vertex1] (4) at (-.4, -.1) {$3$};
	\draw[thick] (1) -- node[voltage, fill=cloud] {$ \bm$} (2);		
	
	\node[text width=2.5cm, text centered] at (2.5,0) { $\bm = \bm_{02} - \bm_{01}$};
	
\end{tikzpicture}
\caption{Reverse gain-preserving edge split. In this case the edge $\{v_{i_1}, v_{i_2}; \bm_{02} - \bm_{01}\}$ is added.  \label{fig:reverseEdgeSplit}}
\end{center}
\end{figure}
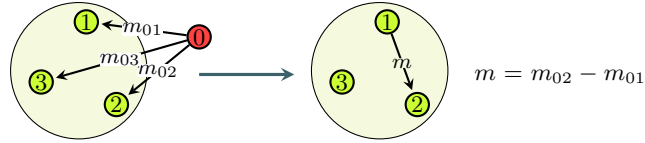\end{verse}

\begin{prop}[Reverse Gain-preserving Edge Split]
If a 3-valent vertex $v_0$ is deleted from a generically independent periodic orbit graph $\pogp$, then a single edge may be added between one pair of vertices formerly adjacent to $v_0$ so that the resulting graph $\pog$ is also a generically independent periodic orbit graph. 
\label{prop:reverseEdgeSplit}
\end{prop}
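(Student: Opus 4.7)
My plan is to adapt the classical Henneberg reverse edge split argument to the periodic setting, working directly with the rigidity matrix $\R_0\pofwpr$ at a generic realization $(\p, \p_0)$. Label the three candidate edges to add as
\[e_1 = \{v_{i_1}, v_{i_2}; \bm_{02} - \bm_{01}\}, \quad e_2 = \{v_{i_2}, v_{i_3}; \bm_{03} - \bm_{02}\}, \quad e_3 = \{v_{i_3}, v_{i_1}; \bm_{01} - \bm_{03}\},\]
and let $H$ denote $G'$ with $v_0$ and its three incident edges removed (gains restricted accordingly). The goal is to show that at least one $H + e_k$ is generically independent, yielding the desired $\pog$.

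The first step is to build a useful combined row in $\R_0\pofwpr$. Set $w_k = \p_0 - \p_{i_k} - \bm_{0k}$; the three $2$-vectors $w_1, w_2, w_3$ admit a nontrivial linear relation $\lambda_1 w_1 + \lambda_2 w_2 + \lambda_3 w_3 = 0$, and for generic $(\p, \p_0)$ each $\lambda_k$ is nonzero. Let
\[R \;=\; \lambda_1 \, \mathrm{row}(e_{01}) + \lambda_2 \, \mathrm{row}(e_{02}) + \lambda_3 \, \mathrm{row}(e_{03}),\]
so that the $v_0$-columns of $R$ vanish by construction. Since the $v_0$-columns of the three rows $\mathrm{row}(e_{0k})$ generically span $\mathbb R^2$, projecting the row-span of $\R_0\pofwpr$ onto the remaining $V$-columns drops its rank by exactly $2$, leaving a space of dimension $|E'| - 2 = |E(H)| + 1$; this space contains both the $|E(H)|$ independent rows of $H$ and the image of $R$, so $R$ is linearly independent of the rows of $H$.

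The second step identifies $R$ with a combination of the candidate rows. Writing $q_k = \p_{i_k} + \bm_{0k}$ and normalizing the relation by $\sum \lambda_k = 1$ (which is generic, since $\sum \lambda_k = 0$ would force the $q_k$ to be collinear), the condition $\sum \lambda_k w_k = 0$ becomes the barycentric identity $\p_0 = \sum \lambda_k q_k$. A direct expansion then yields the key identity
\[R \;=\; \lambda_1 \lambda_2 \, \mathrm{row}(e_1) \;+\; \lambda_2 \lambda_3 \, \mathrm{row}(e_2) \;+\; \lambda_1 \lambda_3 \, \mathrm{row}(e_3)\]
of rows in the rigidity matrix on $V$. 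This formula is uniform across cases: when two of $v_{i_1}, v_{i_2}, v_{i_3}$ coincide, the corresponding $e_k$ is a loop whose rigidity row is identically zero, and that term simply drops out; the case that all three neighbours coincide is excluded from the start, because three parallel edges at $v_0$ cannot all be independent on $\Tor^2$.

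Combining the two observations, $R$ is a nontrivial combination of the rows $\mathrm{row}(e_k)$ with generically nonzero coefficients that is linearly independent of the rows of $H$, so at least one $\mathrm{row}(e_k)$ must itself be independent of the rows of $H$, and this $e_k$ is necessarily a non-loop. This is precisely the condition that $H + e_k$ is generically independent. The main obstacle I anticipate is keeping the decomposition identity uniform across the subcases where the neighbours of $v_0$ coincide; the barycentric reformulation of $\sum \lambda_k w_k = 0$ is what allows a single computation to handle all of them, with loop candidates automatically collapsing to zero in the formula.
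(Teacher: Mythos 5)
Your argument takes a genuinely different, more computational route than the paper's. The paper's proof is by contradiction: assuming all three candidate graphs $G_{12}, G_{23}, G_{31}$ are dependent, it invokes the small graph on $\{v_0, v_{i_1}, v_{i_2}, v_{i_3}\}$ with all six edges, observes that it satisfies $|E| = 2|V| - 2$ and is $T$-gain equivalent to a zero-gain graph (hence dependent by Proposition~\ref{prop:zeroGains}), and then substitutes the three assumed dependences into that relation to manufacture a dependence on $\R_0\pofwpr$. Your proof instead writes down that dependence explicitly: the barycentric identity $\p_0 = \sum_k \lambda_k q_k$ yields the decomposition $R = \lambda_1\lambda_2\,\mathrm{row}(e_1) + \lambda_2\lambda_3\,\mathrm{row}(e_2) + \lambda_1\lambda_3\,\mathrm{row}(e_3)$, and this identity (which I have verified, including in the degenerate case where two neighbours coincide and the corresponding loop row vanishes) is exactly the concrete form of the dependence the paper obtains combinatorially. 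Your version is more direct and has the merit of treating all coincidence cases uniformly through the same formula; the paper's version is shorter because it can simply cite the $T$-gain machinery it has already set up. Both ultimately run the same logic: a dependence among the candidate rows, linear independence from the rows of $H$, and a pigeonhole conclusion.

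One intermediate step, however, is not sound as stated. You argue that projecting the row span of $\R_0\pofwpr$ onto the non-$v_0$-columns ``drops its rank by exactly $2$, leaving a space of dimension $|E'| - 2 = |E(H)| + 1$,'' and infer from this dimension count that $R$ is independent of the rows of $H$. The rank of the projection is $|E'|$ minus the dimension of the subspace of the row span supported only on the two $v_0$-columns, which need not be $2$ (since each row of the rigidity matrix has vertex-entries summing to zero, a nonzero vector in the row span cannot be supported on the two $v_0$-columns, so in fact the rank drops by $0$). Moreover, even the dimension figure $|E(H)|+1$ would not by itself force $R$ out of the span of the $H$-rows. Fortunately the conclusion you need is correct and has a much simpler justification that you should substitute: $R$ is by construction a nontrivial linear combination of $\mathrm{row}(e_{01}), \mathrm{row}(e_{02}), \mathrm{row}(e_{03})$ with generically nonzero coefficients $\lambda_k$, so if $R$ were expressible as a combination of the rows of $H$, subtracting the two expressions would exhibit a nontrivial dependence among the rows of $\R_0\pofwpr$, contradicting the hypothesis that $\pogp$ is generically independent. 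With that replacement the proof is complete and correct.
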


\begin{proof}
Suppose that the rows of $\R_0\pofwpr$ are independent for some realization $\p'$, and suppose that the vertex $v_0$ is connected to vertices $v_{i_1}, v_{i_2}$ and $v_{i_3}$, where at most two of these vertices are the same. Let $E^*$ be the edge set obtained from that of $G'$ by removing the edges incident to $v_0$. Let $\p$ be the realization obtained from $p'$ by deleting the point corresponding to $v_0$. Let $G_{12}, G_{23}$ and $G_{31}$ be the graphs with vertex set $V \backslash \{v_0\}$, and edge sets $E_{12} = E^* \cup \{ v_{i_1}, v_{i_2}; \bm_{02} - \bm_{01}\}$ and similarly for $E_{23}$ and $E_{31}$. If any of these graphs is independent at $\p$ then we are done. 

Assume to the contrary that no such graph is independent. Then the rows of the matrices corresponding to each of these frameworks are dependent. Writing $R_e$ as the row of the rigidity matrix corresponding to the edge $e$, we have 
\[\alpha_{12} R_{12} = \sum_{e \in E^*} -\alpha_e R_e \textrm{\ \ with \ } \alpha_{12} \neq 0,\]
\[\beta_{23} R_{23} = \sum_{e \in E^*} -\beta_e R_e \textrm{\ \ with \ } \beta_{23} \neq 0,\]
\[\gamma_{31} R_{31} = \sum_{e \in E^*} -\gamma_e R_e \textrm{\ \ with \ } \gamma_{31} \neq 0.\]
We now have two cases depending on whether the vertices $v_{i_1}, v_{i_2}$ and $v_{i_3}$ are distinct or not.
%
If $v_{i_1}, v_{i_2}$ and $v_{i_3}$ are distinct, consider the graph on the vertices $\{v_0, v_{i_1}, v_{i_2}, v_{i_3}\}$ with all of the candidate edges (see Figure \ref{fig:proofOfReverseEdgeSplit}). This is $(2, 2)$-tight. Note that the net gain on any closed path in the graph is $(0,0)$, and hence this graph is $T$-gain equivalent to a graph with all gains identically zero. By Proposition \ref{prop:zeroGains} and Theorem \ref{thm:TgainsPreserveRigidity}, this graph is dependent. 

\begin{verse}
\begin{figure}[h!]
\begin{center}
\begin{tikzpicture}[->,>=stealth,shorten >=1pt,auto,node distance=2.8cm,thick, font=\footnotesize] 
\tikzstyle{vertex1}=[circle, draw, fill=couch, inner sep=.5pt, minimum width=3.5pt, font=\footnotesize]; 
\tikzstyle{vertex2}=[circle, draw, fill=melon, inner sep=.5pt, minimum width=3.5pt, font=\footnotesize]; 
\tikzstyle{voltage} = [fill=white, inner sep = 0pt,  font=\scriptsize, anchor=center];

	\node[vertex1] (1) at (0,1.5)  {$1$};
	\node[vertex1] (2) at (2,-.75) {$2$};
	   \node[vertex1] (4) at (-2, -.75) {$3$};
	\node[vertex2] (3) at (0, .2) {$0$};

	\draw[thick] (3) --  node[voltage] {$\bm_{01}$} (1);
	\draw[thick] (3) -- node[voltage] {$\bm_{02}$} (2);
	\draw[thick] (3) -- node[voltage] {$\bm_{03}$} (4);
\draw[thick] (1) --  node[voltage] {$\bm_{02} - \bm_{01}$} (2);
\draw[thick] (2) --  node[voltage] {$\bm_{03} - \bm_{02}$} (4);
\draw[thick] (4) --  node[voltage] {$\bm_{01} - \bm_{03}$} (1);
	
\end{tikzpicture}
\caption{This graph, corresponding to Case 1 of Proposition \ref{prop:reverseEdgeSplit}, is $(2,2)$-tight, and is $T$-gain equivalent to a graph with all zero gains, therefore a dependence exists among the edges.\label{fig:proofOfReverseEdgeSplit}}
\end{center}
\end{figure}
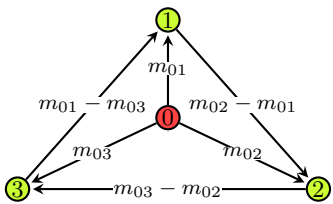\end{verse}

Therefore, we have \[\omega_{01}R_{01} + \omega_{02}R_{02} + \omega_{03}R_{03} + \omega_{12}R_{12} + \omega_{23}R_{23} + \omega_{31}R_{31} = 0.\]  
Scaling and substituting the expressions above, we obtain
\[\omega_{01}R_{01} + \omega_{02}R_{02} + \omega_{03}R_{03} +  \sum_{e \in E^*} -(\alpha_e' + \beta_e' + \gamma_e') R_e = 0.\]
At least one of $\omega_{01}, \omega_{02}, \omega_{03}$ must be non-zero (otherwise this is a dependence on a generic triangle), which is a dependence on the rows of $\R_0\pofwpr$, a contradiction. Therefore, at least one of the graphs $G_{12}, G_{23}, G_{31}$ must be independent. \\

If $v_{i_1}, v_{i_2}$ and $v_{i_3}$ are not all distinct, assume without loss of generality that $v_{i_2} = v_{i_3}$. We consider the graph on the vertices $\{v_0, v_{i_1}, v_{i_2}\}$ with all of the candidate edges (see Figure \ref{fig:proofOfReverseEdgeSplit2}). This graph has $|E| = 2|V| - 1$, and hence is dependent. The proof of this case now follows the proof of the previous case. Once again, at least one of $\omega_{01}, \omega_{02}, \omega_{03}$ must be non-zero, since we assumed $\p$ generic. That is, with $m_{02} \neq m_{03}$, the rows of the rigidity matrix corresponding to $\{v_1, v_2; m_{02} - m_{01}\}$ and $\{v_1, v_2; m_{03} - m_{01}\}$ are independent. \end{proof}
\begin{verse}
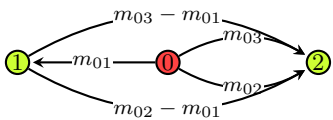
\begin{figure}[h!]
\begin{center}
\begin{tikzpicture}[->,>=stealth,shorten >=1pt,auto,node distance=2.8cm,thick, font=\footnotesize] 
\tikzstyle{vertex1}=[circle, draw, fill=couch, inner sep=.5pt, minimum width=3.5pt, font=\footnotesize]; 
\tikzstyle{vertex2}=[circle, draw, fill=melon, inner sep=.5pt, minimum width=3.5pt, font=\footnotesize]; 
\tikzstyle{voltage} = [fill=white, inner sep = 0pt,  font=\scriptsize, anchor=center];

	\node[vertex1] (1) at (-2,0)  {$1$};
	\node[vertex1] (2) at (2,0) {$2$};
	\node[vertex2] (0) at (0, 0) {$0$};

	\draw[thick] (0) -- node[voltage] {$\bm_{01}$} (1);
	
	\draw[thick] (0) edge  [bend right]  node[voltage] {$\bm_{02}$} (2);
	\draw[thick] (0) edge  [bend left] node[voltage] {$\bm_{03}$} (2);
\draw[thick] (1) edge [bend right]  node[voltage] {$\bm_{02} - \bm_{01}$} (2);
\draw[thick] (1) edge [bend left]  node[voltage] {$\bm_{03} - \bm_{01}$} (2);
	
\end{tikzpicture}
\caption{This graph, corresponding to Case 2 of Proposition \ref{prop:reverseEdgeSplit}, satisfies $|E| = 2|V| - 1$, therefore a dependence exists among the edges. \label{fig:proofOfReverseEdgeSplit2}}
\end{center}
\end{figure}\end{verse}
%


The process of deleting a three-valent vertex from $\pog$ by a reverse edge split, and then performing an edge split will not usually produce a graph that is identical to the original (see Figure \ref{fig:deletingSplitting}). However, we can ensure that we always produce a graph whose rigidity matrix has the same rank as the original, using the following lemma:

\begin{lem}
Let $\pog$ be a periodic orbit graph, and let $\pogp$ be a reverse edge split of $\pog$. Then for some edge split $\langle \overline G, \overline m \rangle$ of $\pogp$ with $G = \overline G$, the resulting graph $\pogom$ is $T$-gain equivalent to $\pog$. 
%
\label{lem:deletingSplitting}
\end{lem}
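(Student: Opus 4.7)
The plan is to construct the edge split explicitly by mirroring the reverse edge split, and then to verify $T$-gain equivalence by showing that the net gain on every cycle of $G$ is preserved. Let $v_0$ be the $3$-valent vertex of $\pog$ deleted by the reverse edge split, with incident edges $e_j = \{v_0, v_{i_j}; \bm_{0j}\}$ for $j=1,2,3$; by hypothesis at most two of $v_{i_1}, v_{i_2}, v_{i_3}$ coincide. By permuting indices and, if necessary, reversing the direction of the edge inserted by the reverse edge split, I may assume that the inserted edge is $e = \{v_{i_1}, v_{i_2}; \bm_{02} - \bm_{01}\}$ with $v_{i_1} \neq v_{i_3}$ (this is always achievable since $v_{i_1}, v_{i_2}, v_{i_3}$ cannot all coincide).

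Next, I define $\langle \overline G, \overline \bm \rangle$ to be the edge split of $\pogp$ that splits $e$ at a reinserted vertex (again called $v_0$) with incident edges
\[\overline{e}_1 = \{v_0, v_{i_1}; (0,0)\}, \qquad \overline{e}_2 = \{v_0, v_{i_2}; \bm_{02} - \bm_{01}\}, \qquad \overline{e}_3 = \{v_0, v_{i_3}; \bm_{03} - \bm_{01}\}.\]
The gains on $\overline{e}_1$ and $\overline{e}_2$ are exactly those prescribed by the definition of an edge split of $e$. The non-degeneracy condition on $\overline{e}_3$ (namely $\overline{\bm}_{03} \neq \overline{\bm}_e$ whenever $v_{i_2} = v_{i_3}$) follows because in that case $e_2$ and $e_3$ are distinct parallel edges in $\pog$, so $\bm_{03} \neq \bm_{02}$, and hence $\overline{\bm}_{03} = \bm_{03} - \bm_{01} \neq \bm_{02} - \bm_{01} = \overline{\bm}_e$. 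By construction, $\overline G$ has the same vertex set, the same incidences, and the same multi-edge structure as $G$, so $\overline G = G$.

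It remains to verify that $\pogom \sim \pog$. By Theorem \ref{thm:TgainsPreserveRigidity} and the property that the $T$-gain procedure preserves net gains on cycles, it suffices to check that every cycle of $G$ carries the same net gain under $\bm$ and under $\overline \bm$. Any cycle that avoids $v_0$ uses only edges outside $\{e_1, e_2, e_3\}$, which are untouched by the construction. A simple cycle through $v_0$ traverses exactly two of $e_1, e_2, e_3$, contributing a net gain of $\pm(\bm_{0k} - \bm_{0j})$ in $\pog$ and
\[\pm(\overline{\bm}_{0k} - \overline{\bm}_{0j}) = \pm\bigl((\bm_{0k} - \bm_{01}) - (\bm_{0j} - \bm_{01})\bigr) = \pm(\bm_{0k} - \bm_{0j})\]
in $\pogom$, so the contributions agree. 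Since the cycle space is generated by simple cycles, every net cycle gain is preserved, and $\pog$ and $\pogom$ are $T$-gain equivalent.

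The only real obstacle is bookkeeping: choosing the directional convention on $e$ so that the non-degeneracy conditions in the edge-split definition are satisfied, and handling the cases where some $v_{i_j}$ coincide. The cycle-gain computation itself is essentially a single switching at $v_0$ by $\bm_{01}$, which cleanly reduces $\overline \bm$ to $\bm$ on the star at $v_0$ while leaving all other edges unchanged.
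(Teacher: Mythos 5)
Your proof is correct, and it is a mild variant of the paper's argument rather than a different approach. You first construct the same edge split (with the same three gains on the reinserted edges) and then verify $T$-gain equivalence by showing every cycle of $G$ has the same net gain under $\bm$ and $\overline{\bm}$; the paper instead picks an explicit spanning tree $T$ (a tree $T'$ of $G'$ rooted at $v_1$ avoiding $e$, extended by the edge $\{v_0,v_1\}$) and verifies that the $T$-gain procedure produces identical labellings. These are two expressions of the same underlying observation you make explicit at the end: $\overline\bm$ is obtained from $\bm$ by a switching at $v_0$ by $\bm_{01}$. Your direct cycle-gain check is cleaner because it avoids the spanning-tree bookkeeping, but it quietly relies on the converse direction (equal net cycle gains $\Rightarrow$ same $T$-gain graph for any $T$), which the paper treats as known but never quite states; citing Theorem \ref{thm:TgainsPreserveRigidity} here is the wrong reference, since that theorem goes from $T$-gain equivalence to equal rigidity, not from equal cycle gains to $T$-gain equivalence. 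You are also more careful than the paper about the index choices needed so that $v_{i_1}\neq v_{i_3}$ and about the non-degeneracy condition $\overline\bm_{03}\neq\overline\bm_e$ when $v_{i_2}=v_{i_3}$, which the paper glosses over; that extra care is a genuine improvement.
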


\begin{proof}
Let $v_0$ be a $3$-valent vertex of $\pog$, adjacent to vertices $v_1, v_2, v_3$ (see Figure \ref{fig:deletingSplitting}). After deleting $v_0$, suppose without loss of generality that the edge $e = \{v_1, v_2; \bm_{02} - \bm_{01}\}$ was added to form the graph $\pogp$. We perform an edge split on this edge to obtain a graph that differs from our original orbit graph, but which has a  rigidity matrix with the same rank. In particular, we add to $\pogp$ the vertex $v_0$ and the three edges:
\[\{v_0, v_1; (0, 0)\}, \ \ \{v_0, v_2; \bm_{02} - \bm_{01}\}, \ \ \{v_0, v_3; \bm_{03} - \bm_{01}\}.\]
Let the resulting infinitesimally rigid graph be denoted $\pogom$. Note that the gains on the first two edges are determined by the reverse edge split, but the gain on the third edge is a `free' choice.

\begin{figure}
\begin{center}
\begin{tikzpicture}[->,>=stealth,shorten >=1pt,auto,node distance=2.8cm,thick, font=\footnotesize, scale=1.2] 
\tikzstyle{vertex1}=[circle, draw, fill=couch, inner sep=.5pt, minimum width=3.5pt, font=\footnotesize]; 
\tikzstyle{vertex2}=[circle, draw, fill=melon, inner sep=.5pt, minimum width=3.5pt, font=\footnotesize]; 
\tikzstyle{voltage} = [fill=white, inner sep = 0pt,  font=\scriptsize, anchor=center];

	\draw[thin] (0.1,0.05) circle (.9cm); 
	\fill[cloud] (0.1,0.05) circle (.9cm);
		
	\node[vertex1] (1) at (.2,.7)  {$1$};
	\node[vertex1] (2) at (.3,-.55) {$2$};
	   \node[vertex1] (3) at (-.5, 0) {$3$};
	\node[vertex2] (0) at (1.7, 0) {$0$};
	
	 \node at (0.1, -1.1) {$\pog$};

	\draw[thick] (0) --  node[voltage] {$\bm_{01}$} (1);
		\draw[thick] (0) -- node[voltage] {$\bm_{02}$} (2);
		\draw[thick] (0) -- node[voltage] {$\bm_{03}$} (3);
		
		\pgftransformxshift{4cm}	
		\draw[thin] (0.1,0.05) circle (.9cm); 
	\fill[cloud] (0.1,0.05) circle (.9cm);
		
	\node[vertex1] (1) at (.2,.7)  {$1$};
	\node[vertex1] (2) at (.3,-.55) {$2$};
	   \node[vertex1] (3) at (-.5, 0) {$3$};
	   
	   \node at (0.1, -1.1) {$\pogp$};
	   
	   \draw[thick] (1) -- node[voltage] {$\bm_{02}- \bm_{01}$} (2);
	   
	   \pgftransformxshift{3cm}	
	   \draw[thin] (0.1,0.05) circle (.9cm); 
	   	\fill[cloud] (0.1,0.05) circle (.9cm);

	\node[vertex1] (1) at (.2,.7)  {$1$};
	\node[vertex1] (2) at (.3,-.55) {$2$};
	   \node[vertex1] (3) at (-.5, 0) {$3$};
	\node[vertex2] (0) at (1.7, 0) {$0$};
	
		   \node at (0.1, -1.1) {$\pogo$};

	\draw[thick] (0) --  node[voltage] {$(0,0)$} (1);
		\draw[thick] (0) -- node[voltage] {$\bm_{02}-\bm_{01}$} (2);
		\draw[thick] (0) -- node[voltage] {$\bm_{03}-\bm_{01}$} (3);

	\end{tikzpicture}
	\caption{Proof of Lemma \ref{lem:deletingSplitting}:  Deleting a $3$-valent vertex from $\pog$, followed by an edge split, results in a $T$-gain equivalent periodic orbit graph $\pogo$. \label{fig:deletingSplitting}}
	\end{center}
\end{figure}
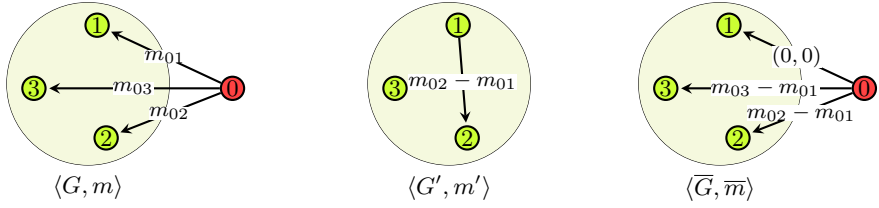	

	
	Now let $T'$ be a spanning tree in $G'$ with root $u = v_1$ that does {\it not} include the edge $e = \{v_1, v_2\}$ (which has gain $\bm_{02} - \bm_{01}$ in $\pog'$). It is always possible to select such a tree, since deleting this edge will not disconnect the graph. Let $T$ be the spanning tree of $G$ created by adding the edge $\{v_0, v_1\}$ to $T'$. This edge has gain  $\bm_{01}$ in $\pog$, and gain $(0,0)$ in $\pogom$.  Performing the $T$-gain procedure on $\pog$ and $\pogom$ with $T$, we obtain identical periodic orbit graphs. For example, the edge $e_2 = \{v_0, v_2, \bm_{02}\} \in \pog$ has $T$-gain 
\begin{eqnarray*} \bm_{T}(e_2) 	& = & \bm(v_0, T) + \bm_{02} - \bm(v_2, T) \\
							& = & -\bm_{01} + \bm_{02} - \bm(v_2, T)  \\
							& = & (0,0) + (\bm_{02} - \bm_{01}) - \bm(v_2, T) \\
							& = & (0,0) + (\bm_{02} - \bm_{01}) - \overline \bm(v_2, \overline T) \\
							& = & \overline \bm_{T}(e_2). 
\end{eqnarray*}
The same is true of the other edges added in the edge split, and since $T$ = $\overline T$ for all of the edges of $\pogp$, the orbit graphs are $T$-gain equivalent. That is, 
\[\langle G, \bm_T \rangle = \langle G, \overline \bm_{T} \rangle.\]
\end{proof}
\index{inductive constructions!on $\Tor^2$|)}

\subsection{Periodic Henneberg Theorem}
\label{sec:periodicHenneberg}
\index{Henneberg's Theorem!periodic $\Tor^2$} \index{periodic Henneberg Theorem}
\begin{thm}[Periodic Henneberg Theorem]
A periodic orbit graph $\pog$  is generically minimally rigid on $\Tor^2$ if and only if it can be constructed from a single vertex on $\Tor^2$ by a sequence of gain-preserving vertex additions and edge splits. 
\label{thm:Henneberg}
\end{thm}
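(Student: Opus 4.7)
The statement is an ``if and only if,'' so I would split it into a sufficiency direction and a necessity direction. The sufficiency (``if'') direction is almost immediate from what has already been established: a single vertex trivially satisfies $|E|=2|V|-2=0$, and its (empty) rigidity matrix has kernel equal to the $2$-dimensional space of translations, so it is generically minimally rigid on $\Tor^2$. Proposition \ref{prop:vertexAddition} and Proposition \ref{prop:edgeSplit}, together with the fact that both moves preserve the counting identity $|E|=2|V|-2$ and the subgraph sparsity condition, imply that each periodic Henneberg move takes a generically minimally rigid periodic orbit graph on $\Tor^2$ to another one. Induction on the number of moves completes this direction.

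For the necessity (``only if'') direction I would argue by induction on $|V|$. The base case $|V|=1$ is the starting single vertex. For $|V|\geq 2$, let $\pog$ be generically minimally rigid, so by Corollary \ref{cor:perMaxwell} we have $|E|=2|V|-2$. The sum of degrees is therefore $4|V|-4$, and hence the minimum degree is at most $3$. A short check using the rigidity matrix rules out degrees $0$ and $1$: a degree-$0$ vertex contributes a free $2$-dimensional block to the kernel of $\R_0$, and a degree-$1$ vertex contributes an unconstrained direction perpendicular to the single incident edge, either of which forces a non-translational motion and contradicts infinitesimal rigidity. Thus $\pog$ has some vertex $v_0$ of degree $2$ or $3$.

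If $\deg(v_0)=2$, I would invoke the reverse direction of Proposition \ref{prop:vertexAddition}: removing $v_0$ and its two incident edges produces a graph $\pogp$ with $|E'|=2|V'|-2$ whose rigidity matrix rows are still independent, and hence which is generically minimally rigid. Induction gives a construction sequence for $\pogp$, and appending the vertex addition that restores $v_0$ yields the desired sequence for $\pog$. If instead $\deg(v_0)=3$, I would apply Proposition \ref{prop:reverseEdgeSplit}: among the (at most) three candidate edges that can be added between the neighbours of $v_0$ after deleting $v_0$, at least one produces a generically independent graph $\pogp$, which again satisfies $|E'|=2|V'|-2$ and hence is minimally rigid. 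By induction $\pogp$ is constructible from a single vertex, and performing an edge split at the newly added edge produces a graph $\pogom$ that, by Lemma \ref{lem:deletingSplitting}, is $T$-gain equivalent to $\pog$; by Theorem \ref{thm:TgainsPreserveRigidity} this is the same graph for all rigidity purposes, so $\pog$ itself is constructible in the relevant sense.

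The main obstacle is the degree-$3$ reduction: one must be able to guarantee that at least one of the three candidate edges, when added after deleting $v_0$, preserves independence and does not create an illegal configuration (e.g.\ a repeated edge with the same gain or a self-loop). This is exactly the content of Proposition \ref{prop:reverseEdgeSplit}, whose proof leverages the fact that any graph built on the three neighbours together with $v_0$ and all candidate connecting edges is $T$-gain equivalent to a zero-gain graph that violates the finite Laman count, so a simultaneous dependence in all three candidate reductions would propagate back to a dependence in $\pog$, contradicting minimal rigidity. With that lemma available, the induction runs smoothly, and combining the two directions gives the theorem.
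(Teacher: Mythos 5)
Your proposal is correct and follows essentially the same route as the paper, which only sketches the argument: induction on $|V|$, with the base case a single vertex, using Propositions \ref{prop:vertexAddition} and \ref{prop:edgeSplit} for sufficiency, and using the counting identity $|E|=2|V|-2$ to find a vertex of degree $2$ or $3$ and then Proposition \ref{prop:reverseEdgeSplit} together with Lemma \ref{lem:deletingSplitting} for the reduction step in necessity. You fill in details the paper leaves implicit, most usefully the observation that the degree-$3$ reduction only recovers $\pog$ up to $T$-gain equivalence, which is handled by Theorem \ref{thm:TgainsPreserveRigidity}; one small simplification you could make is to rule out vertices of degree $0$ or $1$ purely combinatorially from the subgraph count $|E'|\leq 2|V'|-2$ applied to $G-v_0$, rather than by inspecting the rigidity matrix.
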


The proof of this result follows the proof of Henneberg's result appearing in, for example, \cite{CountingFrameworks}. 
\begin{proof}

$(\Longleftarrow)$ Let $\pog$ be the periodic orbit graph consisting of a single vertex, $V = \{v_0\}$ and $E = \emptyset$. This is trivially a generically rigid periodic orbit graph. By Lemmas \ref{prop:vertexAddition} and \ref{prop:edgeSplit} we can perform gain-preserving vertex additions and edge splits to obtain a new generically rigid periodic orbit graph. Since both operations preserve the count $|E| = 2|V| -2$, the new orbit graph is generically minimally rigid on $\Tor^2$.

$(\Longrightarrow)$ This direction is proved by induction on the number of vertices, $|V|$. As noted above the single vertex on $\Tor^2$ is generically infinitesimally rigid, which provides the base case. 

Consider a generically minimally rigid periodic orbit graph $\pog$ with $|V|\geq2$, and assume that all infinitesimally rigid frameworks on $\Tor^2$ with fewer than $|V|$ vertices satisfy the hypothesis. Since $|E| = 2|V| - 2$, the average valence of any given vertex is 
$$\rho = \frac{2|E|}{|V|} = \frac{2(2|V| - 2)}{|V|} = \frac{4|V|-4}{|V|} = \ 4 - 4/|V|\  <\  4.$$
In addition, because the orbit graph is infinitesimally rigid on $\Tor^2$, every vertex has valence at least $2$ (any graph with a pendent vertex is {\it not} infinitesimally rigid). These two facts together imply that $G$ must have a vertex of valence either $2$ or $3$. 

If $G$ has a vertex of valence $2$, then $\pog$ is a (periodic) vertex-addtion of an infinitesimally rigid framework on a graph $(V', E')$ by Proposition \ref{prop:vertexAddition}. 

If $G$ has a vertex of valence $3$, then by Lemma \ref{lem:deletingSplitting} $\pog$ is $T$-gain equivalent to an edge split of an infinitesimally rigid framework on a graph $(V'', E'')$. In either case, $|V'| = |V''| < n$, and we may apply the induction hypothesis to the underlying graph. %
\end{proof}

For a periodic orbit graph $\pog$, we call the sequence of orbit graphs 
\[\langle G_1, m_1 \rangle, \langle G_2, m_2 \rangle, \dots, \langle G_n, m_n \rangle = \pog\]
beginning with a single vertex $|V_1| = 1$ and ending with $\pog$ ($|V_n| = n = |V|$) the {\it (periodic) Henneberg sequence} for $\pog$.  An example of a Henneberg sequence is shown in Figure \ref{fig:henn}. 

\begin{verse}
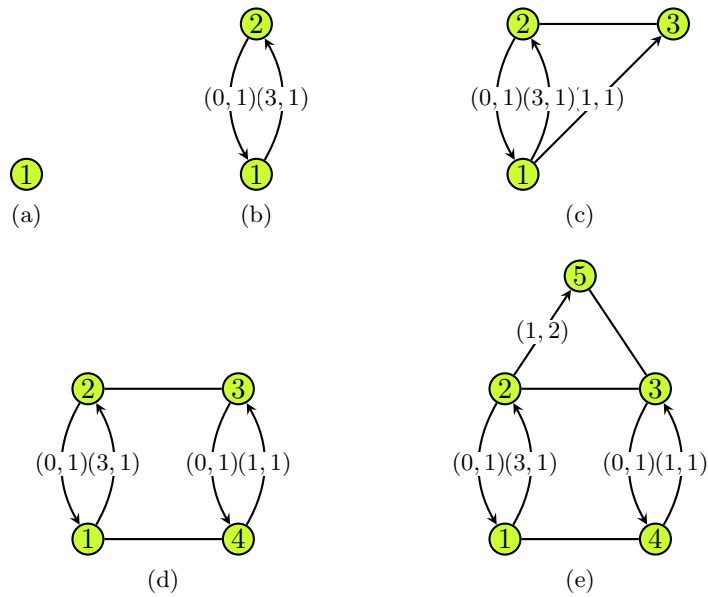
\begin{figure}
\begin{center}
\subfloat[]{\begin{tikzpicture}[auto,node distance=2cm, thick] 
\tikzstyle{vertex1}=[circle, draw, fill=couch, inner sep=1pt, minimum width=5pt]; 
\tikzstyle{vertex2}=[circle, draw, fill=melon, inner sep=1pt, minimum width=5pt]; 
\tikzstyle{gain} = [fill=white, inner sep = 0pt,  font=\footnotesize, anchor=center];

\node[vertex1] (4) [below of=1] {$1$}; 

\end{tikzpicture}}\hspace{2cm}
\subfloat[]{\begin{tikzpicture}[auto,node distance=2cm, thick] 
\tikzstyle{vertex1}=[circle, draw, fill=couch, inner sep=1pt, minimum width=5pt]; 
\tikzstyle{gain} = [fill=white, inner sep = 0pt,  font=\footnotesize, anchor=center];

\node[vertex1] (1) {$2$}; 
\node[vertex1] (4) [below of=1] {$1$}; 

\pgfsetarrowsend{stealth}
\path 
(4) edge [bend right] node[gain] {$(3,1)$}  (1)
(1) edge [bend right] node[gain] {$(0,1)$} (4);
\pgfsetarrowsend{}

\end{tikzpicture}}\hspace{2cm}
\subfloat[]{\begin{tikzpicture}[auto,node distance=2cm, thick] 
\tikzstyle{vertex1}=[circle, draw, fill=couch, inner sep=1pt, minimum width=5pt]; 
\tikzstyle{gain} = [fill=white, inner sep = 0pt,  font=\footnotesize, anchor=center];

\node[vertex1] (1) {$2$}; 
\node[vertex1] (2) [right of=1] {$3$}; 
\node[vertex1] (4) [below of=1] {$1$}; 

\path (1) edge (2);

\pgfsetarrowsend{stealth}
\path 
(4) edge node[gain] {$(1,1)$} (2)
(4) edge [bend right] node[gain] {$(3,1)$}  (1)
(1) edge [bend right] node[gain] {$(0,1)$} (4);
\pgfsetarrowsend{}

\end{tikzpicture}}\\
\subfloat[]{\begin{tikzpicture}[auto,node distance=2cm, thick] 
\tikzstyle{vertex1}=[circle, draw, fill=couch, inner sep=1pt, minimum width=5pt]; 
\tikzstyle{gain} = [fill=white, inner sep = 0pt,  font=\footnotesize, anchor=center];

\node[vertex1] (1) {$2$}; 
\node[vertex1] (2) [right of=1] {$3$}; 
\node[vertex1] (3) [below of=2] {$4$}; 
\node[vertex1] (4) [below of=1] {$1$}; 

\path 
(1) edge  (2) 
(4) edge (3);

\pgfsetarrowsend{stealth}
\path 
(2) edge [bend right] node[gain] {$(0,1)$} (3)
(4) edge [bend right] node[gain] {$(3,1)$}  (1)
(1) edge [bend right] node[gain] {$(0,1)$} (4)
(3) edge [bend right] node[gain]{$(1,1)$}  (2);

\pgfsetarrowsend{}

\end{tikzpicture}}\hspace{2cm}
\subfloat[]{\begin{tikzpicture}[auto,node distance=2cm, thick] 
\tikzstyle{vertex1}=[circle, draw, fill=couch, inner sep=1pt, minimum width=5pt]; 
\tikzstyle{gain} = [fill=white, inner sep = 0pt,  font=\footnotesize, anchor=center];

\node[vertex1] (1) {$2$}; 
\node[vertex1] (2) [right of=1] {$3$}; 
\node[vertex1] (3) [below of=2] {$4$}; 
\node[vertex1] (4) [below of=1] {$1$}; 
\node[vertex1] (5) at (1, 1.5) {$5$};

\path 
(5) edge (2)
(1) edge  (2) 
(4) edge (3);

\pgfsetarrowsend{stealth}
\path 
(1) edge node[gain] {$(1,2)$} (5) 
(2) edge [bend right] node[gain] {$(0,1)$} (3)
(4) edge [bend right] node[gain] {$(3,1)$}  (1)
(1) edge [bend right] node[gain] {$(0,1)$} (4)
(3) edge [bend right] node[gain]{$(1,1)$}  (2);
\pgfsetarrowsend{}
\end{tikzpicture}}

\caption{An example of a periodic Henneberg sequence. The single vertex (a) becomes a single cycle through a vertex addition (b). Adding a third vertex in (c), then splitting off the edge $\{v_1, v_3; (1,1)\}$ and adding the fourth vertex (d). The final graph is shown in (e). \label{fig:henn}}
\end{center}
\end{figure}\end{verse}


\section{$(2, 2)$-tight graphs}
\label{sec:combinatorial}

By Theorem \ref{thm:perMaxwell}, generically minimally rigid periodic orbit graphs on $\Tor^2$ are $(2,2)$-tight. In this section we outline some combinatorial results about $(2, 2)$-tight graphs, starting with some well known properties. These results will be used to prove our main result in the subsequent section.
\begin{lem}
Let $G$ be a $(2,2)$-tight graph.
Let $v_0$ be some vertex of the graph. Let $\mathcal G$ be the set of all subgraphs $G' \subseteq G$ that contain $v_0$ and satisfy $|E'| = 2|V'| - 2$. Then $\mathcal G$ is a lattice. 
\label{lem:lattice}
\end{lem}
This property is a consequence of the structure theorem for $(k, \ell)$-sparse graphs that can be found in \cite{pebbleGameSparse}. Another property from the same paper (\cite{pebbleGameSparse}, Theorem 5) is the following: 
\begin{lem}
Let $G=(V, E)$ be a $(2,3)$-tight graph. Let $G_1 = (V_1, E_1)$ and  $G_2 = (V_2, E_2)$ be $(2,3)$-tight vertex-induced subgraphs of $G$. If  $V_1 \cap V_2 \neq 0$, then either
\begin{enumerate}
\item $|V_{1} \cap V_{2}| = 1$ and $|E_{1} \cup E_{2}| = 2|V_{1} \cup V_{2}| - 4$ {\rm or}
\item $|V_{1} \cap V_{2}| > 1$ and $|E_{1} \cup E_{2}| = 2|V_{1} \cup V_{2}| - 3$ and  $|E_{1} \cap E_{2}| = 2|V_{1} \cap V_{2}| - 3$.
\end{enumerate}
\label{lem:intersection}
\end{lem}

\begin{cor}
The graph $( V_{1} \cap V_{2}, E_{1} \cap E_{2})$ is connected.
\label{cor:intersection}
\end{cor}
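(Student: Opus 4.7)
The plan is to prove the corollary by contradiction. Suppose that the intersection graph $(V_{ij} \cap V_{ik}, E_{ij} \cap E_{ik})$ is disconnected, and let $H_1, \ldots, H_r$ be its connected components with $r \geq 2$, where $H_s$ has $n_s$ vertices and $e_s$ edges.

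The first step is to bound each $e_s$ in terms of $n_s$. Each $H_s$ is a subgraph of both $G_{ij}$ and $G_{ik}$, neither of which contains a constructive cycle, so $H_s$ has no constructive cycle either. Since $H_s \subseteq G$ and $\bm$ is constructive on $G$, the contrapositive of the definition of constructiveness (``every subgraph with $|E'| = 2|V'| - 2$ contains a constructive cycle'') forces $e_s \leq 2 n_s - 3$ whenever $n_s \geq 2$. Any isolated-vertex component trivially satisfies $e_s = 0$.

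The second step combines these component bounds with the Case~2 identity $|E_{ij} \cap E_{ik}| = 2|V_{ij} \cap V_{ik}| - 3$ from Lemma~\ref{lem:intersection}. Letting $r_1$ count the isolated-vertex components and $r_2 = r - r_1$ the rest, summing yields
\[
2|V_{ij} \cap V_{ik}| - 3 \;=\; \sum_{s=1}^{r} e_s \;\leq\; 2\bigl(|V_{ij} \cap V_{ik}| - r_1\bigr) - 3 r_2,
\]
which collapses to $2 r_1 + 3 r_2 \leq 3$. Together with $r_1 + r_2 = r \geq 2$, this is infeasible in every nonnegative-integer case, delivering the desired contradiction.

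The only mildly delicate point is the separate handling of isolated-vertex components, where the generic bound $e_s \leq 2n_s - 3$ would read $0 \leq -1$ and fail; this is harmless because the crude bound $e_s = 0$ is already tight enough to preserve the inequality above. Otherwise the argument is purely bookkeeping on top of the counts already established in Lemma~\ref{lem:intersection}.
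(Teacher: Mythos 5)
Your proof is correct, and since the paper states Corollary~\ref{cor:intersection} without proof (it is left to the reader as a consequence of Lemma~\ref{lem:intersection}), there is nothing to compare it against directly. Your argument is the natural one: in the nontrivial Case~2 of Lemma~\ref{lem:intersection} the intersection graph is $(2,3)$-tight, i.e.\ it has exactly $2n-3$ edges on $n>1$ vertices while every sub-piece on at least two vertices is bounded by $2n'-3$ edges (which you correctly extract from the combination of the $|E'|\le 2|V'|-2$ sparsity and the absence of constructive cycles, since any subgraph of $G_{ij}$ or $G_{ik}$ inherits ``no constructive cycle''); a componentwise count then shows a disconnected such graph can have at most $2n - 2r_1 - 3r_2$ edges, which falls short of $2n-3$ once $r_1 + r_2 \geq 2$. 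The separate treatment of isolated-vertex components is exactly the bookkeeping one must do and you handle it correctly. One small remark for completeness: Case~1 of Lemma~\ref{lem:intersection}, where $|V_{ij}\cap V_{ik}|=1$, should be mentioned explicitly (the intersection is then a single vertex, trivially connected); you implicitly restrict to Case~2 without saying so.
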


The next result collects some further useful properties of $(2,2)$-tight graphs.  

\begin{lem}
Let $G = (V, E)$ be a $(2,2)$-tight graph, and let $v_0 \in V$ have $N(v_0) = \{v_1, v_2, v_3\}$ (see Figure \ref{fig:subgraphs}). Let $H$ be the graph obtained from $G$ by deleting vertex $v_0$ and its incident edges. Then:
\begin{enumerate}[(i)]
\item $v_1, v_2$ and $v_3$ are not all in a $(2,2)$-tight subgraph of $H$. 
\item If $v_1, v_2$ are in a $(2,2)$-tight subgraph $G_{12}$ of $H$, then neither pairs of vertices $v_1, v_3$ or $v_2, v_3$ are in the span of a $(2,2)$-tight subgraph of $H$. 
\item If $v_1, v_2$ are in a $(2,2)$-tight subgraph $G_{12}$ of $H$, and $v_1, v_3$ are in a $(2,3)$-tight subgraph $G_{13}$ of $H$, then $V_{12} \cap V_{13} = \{v_1\}$.
\item If $v_1$ and $v_2$ are in a $(2,2)$-tight subgraph $G_{12}$ of $H$, then the pairs of vertices $v_1, v_3$ and $v_2, v_3$ are not both in the span of $(2,3)$-tight subgraphs of $H$. 
\item If some set of edge $E' \subset E$ spans $v_1, v_2, v_3$ and is $(2,3)$-tight, then the set of edges $E'$ together with their incident vertices $V'$ form a vertex-induced subgraph of $H$. 
\end{enumerate}
\label{lem:ref1}
\end{lem}

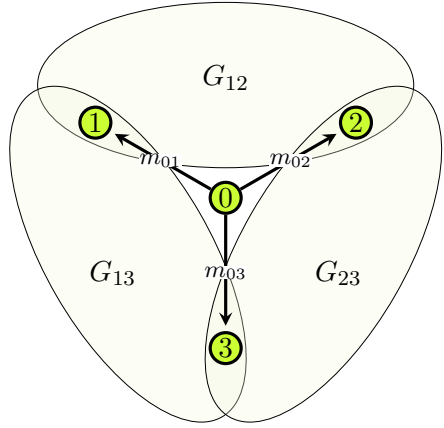
\begin{figure}
\begin{center}
\begin{tikzpicture}[very thick,scale=1,>=stealth,->,shorten >=2pt,looseness=0.5,auto] 

\tikzstyle{vertex1}=[circle, draw, fill=couch, inner sep=1pt, minimum width=5pt]; 
\tikzstyle{vertex2}=[circle, draw, fill=melon, inner sep=1pt, minimum width=5pt]; 
\tikzstyle{gain} = [fill=white, inner sep = 0pt,  font=\footnotesize, anchor=center];
\tikzstyle{arrow}=[->,shorten >=1pt,>=stealthÕ,semithick]  
\pgfsetfillopacity{0.3} 
\fill[cloud] (90:1.5cm) ellipse (25mm and 11mm); 
\draw[very thin] (90:1.5cm) ellipse (25mm and 11mm); 
\fill[cloud, rotate=120] (90:1.5cm) ellipse (25mm and 11mm); 
\draw[ very thin, rotate=120] (90:1.5cm) ellipse (25mm and 11mm); 
\fill[cloud, rotate=240] (90:1.5cm) ellipse (25mm and 11mm); 
\draw[very thin, rotate=240] (90:1.5cm) ellipse (25mm and 11mm); 
\pgfsetfillopacity{1}

	\node[vertex1] (x3) at (-90:2cm){$3$};
	\node[vertex1] (x1) at (-210:2cm)  { $1$};
	\node[vertex1] (x2) at (30:2cm) {$2$};
	\node[vertex1] (x) at (0,0) {$0$};
	
	\draw[->] (x) -- node[gain] {  $ \bm_{01}$} (x1) 	;
	\draw[->] (x) --  node[gain] {  $ \bm_{02}$} (x2);
	\draw[->] (x) --  node[gain] {  $ \bm_{03}$} (x3);

\node at (1.5, -1) {$G_{23}$};
\node at (0, 1.6) {$G_{12}$};
\node at (-1.5, -1) {$G_{13}$};	

\end{tikzpicture}
\caption{Lemma \ref{lem:ref1}: Subgraphs of $H$ containing pairs of vertices from $\{v_1, v_2, v_3\}$. \label{fig:subgraphs} }
\end{center}
\end{figure}

\begin{proof}
{\it (i)} If such a subgraph existed, the addition of $v_0$ with its three incident edges would be an over-counted subgraph of $G$, which contradicts the assumption that $G$ is $(2,2)$-tight. 

{\it (ii)} Suppose, toward a contradiction, that the pair of vertices $v_2, v_3$ are contained in the $(2,2)$-tight subgraph $G_{23}$.  Then both $G_{12}$ and $G_{23}$ are members of the lattice of subgraphs containing the vertex $V_2$. By Lemma \ref{lem:lattice} it follows that 
\[|E_{12} \cup E_{23}| = 2|V_{12} \cup V_{23}| - 2.\]
Hence $(V_{12} \cup V_{23}, E_{12} \cup E_{23})$ is a subgraph containing all three vertices $v_1, v_2, v_3$ but not $v_0$, and by the proof of part {\it (i)}, this is a contradiction. 

{\it (iii)} First consider the intersection of $G_{12}$ and $G_{13}$. We find
\begin{eqnarray}
|E_{12} \cup E_{13}|  + |E_{12} \cap E_{13}| & = & |E_{12}| + |E_{13}|\nonumber \\
& = & (2|V_{12}| - 2) +(2|V_{13}| - 3) \nonumber \\
& = & 2(|V_{12}|+|V_{13}|) - 5\nonumber \\
& = & 2| V_{12} \cup V_{13}| - 2 + 2|V_{12}\cap V_{13}| - 3. \label{eqn:intersection2}
\end{eqnarray}

Toward a contradiction, suppose that $|V_{12} \cap V_{13}| > 1$. 
Then, because $G_{13}$ is $(2,3)$-tight, we have $|E_{12} \cap E_{13}| \leq 2|V_{12} \cap V_{13}| - 3$, and hence (\ref{eqn:intersection2}) becomes $|E_{12}\cup E_{13}|  \geq  2|V_{12}\cup V_{13}| - 2$.
In fact, since the reverse inequality always holds, we have equality
$|E_{12} \cup E_{13}|  =  2|V_{12}\cup V_{13}| - 2$, which is a contradiction by part {\it (i)}. 

{\it (iv)} 
Suppose, toward a contradiction, that $G_{23}$ and $G_{13}$ are $(2,3)$-tight subgraphs of $H$ which contain the vertices  $v_2, v_3$ and $v_1, v_3$  respectively. 

By {\it (iii)}, it must be the case that $|V_{12} \cap V_{13}| = |V_{12} \cap V_{23}| = 1$.
Then $|E_{12} \cap E_{k}| = 0$, where $k \in \{13, 23\}$ and hence $|E_{12} \cup E_k| = 2|V_{12} \cup C_k| - 3$.

Let $\overline{G} \subset G$ be the vertex-induced subgraph of $G$ on the vertices $V_{23} \cup V_{31}$. Then $|\overline{E}| \geq |E_{23} \cup E_{31}|$. 
By Lemma \ref{lem:intersection}, we know that either

{\it Case 1.} $|V_{23} \cap V_{31}| = 1$ and $|E_{23} \cup E_{31}| = 2|V_{23} \cup V_{31}| - 4$ {\it or}

{\it Case 2.} $|V_{23} \cap V_{31}| > 1$ and $|E_{23} \cup E_{31}| = 2|V_{23} \cup V_{31}| - 3.$

\noindent We will deal with {\it Case 2} first. In this case $|\overline E| = |E_{23} \cup E_{31}| = 2|V_{23} \cup V_{31}| - 3$ (i.e. $\overline{G}$ is a vertex-induced subgraph), since $v_1, v_2, v_3 \in \overline V$ so part {\it (i)} applies. 
We now consider the intersection graph $\overline G \cap G_{12} = (\overline V \cap V_{12}, \overline E \cap E_{12})$. 
But $|\overline V \cap V_{12}| > 1$, which is a contradiction by the argument of part {\it (iii)}. 

We now return to {\it Case 1.} Here $|\overline E| \geq |E_{23} \cup E_{31}| = 2|V_{23} \cup V_{31}| - 4$. If $|\overline E| > |E_{23} \cup E_{31}|$, then we are in the situation of {\it Case 2}. Hence we may assume that $|\overline E| = |E_{23} \cup E_{31}|$, and is therefore a vertex-induced subgraph.  

Notice that the intersection of $V_{12}$ with either of $V_{23}$ or $V_{31}$ may only consist of one element by the argument of part {\it (iii)}. So the three subgraphs must intersect pair-wise in one of the vertices $v_1, v_2, v_3$, and it follows that the intersection of all three of these subgraphs is empty. 
\begin{eqnarray}
|\overline E \cup E_{12}|  + |\overline E \cap E_{12}| & = & |\overline E| + |E_{12}|\nonumber \\
& = & (2|\overline V| - 4) + (2|V_{12}| - 2)\nonumber \\
& = & 2(|\overline V|+|V_{12}|) - 6\nonumber \\
& = & (2|\overline V \cup V_{12}| - 3) + (2|\overline V \cap V_{12}| - 3). \label{eqn:intersection3}
\end{eqnarray}

But we know that $|\overline V \cap V_{12}| = 2$, and it must be the case that $|\overline E \cap E_{12}| = 0$, since the intersection of the three graphs is empty.
Hence equation (\ref{eqn:intersection3}) becomes $|\overline E \cup E_{12}| = 2|\overline V \cup V_{12}| - 2$ which is a contradiction by {\it (i)}, since $v_1, v_2, v_3 \in \overline V \cup V_{12}$. Adding $v_0$ would violate the subgraph property of $G$, and this concludes the proof of part {\it (iv)}.  %

{\it (v)} Follows from {\it (i)}.
\end{proof}
\section{Gain assignments determine rigidity on $\Tor^2$}
\label{sec:gainAssignmentsDetermineRigidity}

In this section, we characterize the generic rigidity properties of a framework on the two-dimensional fixed torus $\Tor^2$ by its gain assignment. In Section \ref{sec:constructiveGainAssignments} we show that only graphs with {\it constructive gain assignments} can be rigid, and Section \ref{sec:constructiveGainAssignmentsAreSufficient} will demonstrate that all such periodic orbit graphs are generically rigid. This forms a fixed torus version of Laman's theorem, which is a second main result.

\subsection{Constructive gain assignments}
\label{sec:constructiveGainAssignments}

Let $\pog$ be a periodic orbit graph. Let $C$ be a closed oriented cycle with no repeated vertices, starting and ending at a vertex $u$ in $G$. Recall that the {\it net (cycle) gain} is the sum $m_{C}$ of the gain assignments of the edges of the cycle, where the signs of the edges are determined by the traversal direction specified by the orientation. We say the net gain on the cycle is {\it non-zero} or {\it non-trivial} if it is non-zero on at least one of the two coordinates of $m_C \in \mathbb Z^2$.  We similarly define the {\it net (path) gain} on a path to be the sum of the gains on the edges of the path. 

Let $\pog$ be a periodic orbit graph where $G$ is $(2,2)$-tight.  A {\it constructive gain assignment} on $G$ is a map $\bm: E^+ \rightarrow \mathbb Z^2$ such that every subgraph $G' \subset G$ with $G' = (V', E')$ and $|E'| = 2|V'| - 2$ contains some cycle with a non-zero net gain. A cycle $C$ with a non-zero net gain will be called a {\it constructive cycle}. If $\langle H, m_H \rangle$ is a graph with $|E(H)| > 2|V(H)| - 2$, we say that $\langle H, m_H \rangle$ has a constructive gain assignment if there is some subgraph $G \subset H$ such that $m_H |_G$ is constructive on $G$.

\begin{prop}
Let $\pog$ be a periodic orbit graph where $G$ is $(2,2)$-tight. If $\pofw$ is infinitesimally rigid on $\Tor^2$ for some realization $\p$, then $\bm$ is constructive.
\label{prop:constructiveIsNecessary}
\end{prop}

\begin{proof}
We will show the contrapositive. Suppose that $\bm$ is not constructive, and therefore there exists a subgraph $\pogp \subseteq \pog$ with $|E'| = 2|V'| - 2$ and no constructive cycles. Let $T'$ be a spanning tree in $G'$, and expand $T'$ to a spanning tree $T$ of all of $G$. This is always possible, since $\pofw$ is infinitesimally rigid on $\Tor^2$, and therefore $G$ is connected.

Perform the $T$-gain procedure on $\pog$. Every edge in $T$ and therefore in $T'$ will have zero gains, and hence no other edge in $E'$ may have non-zero gain, since the $T$-gain procedure preserves net cycle gains. 

Hence $\pogp$ consists of $2|V'| - 2$ edges with zero gains, which correspond to dependent rows in the rigidity matrix, since at most $2|V'| - 3$ edges without gains can be independent in the rigidity matrix, by Lemma \ref{prop:zeroGains}. Therefore, 
$$\rank \R_0\pofw < 2|V| - 2,$$
and $\pofw$ is infinitesimally flexible on $\Tor^2$.%
\end{proof}

For simplicity in what follows, we say that a periodic orbit graph $\pog$ where $G$ is $(2,2)$-tight and $m$ is a constructive gain assignment is a {\it constructive periodic orbit graph}, or that the periodic orbit graph is {\it constructive}. For general gain graphs, cycles with a non-zero net gain are usually called {\it unbalanced} cycles. We use the term `constructive' for this special class of $\mathbb Z^2$ -labeled gain graphs to connote the idea that these cycles {\it construct} connectivity in the infinite periodic frameworks.

The following section will demonstrate that constructive gain assignments are also sufficient for infinitesimal rigidity on $\Tor^2$. 

\subsection{Periodic Laman Theorem on $\Tor^2$}
\label{sec:constructiveGainAssignmentsAreSufficient}

The following is the second main result, which completely characterizes the generic minimal rigidity of periodic orbit frameworks on $\Tor^2$. The proof is developed through Propositions \ref{prop:canAlwaysDelete}, \ref{prop:v0is2Valent} and \ref{prop:twoConstFailure}.

\begin{thm}[Periodic Laman Theorem]
Let $\pog$ be a periodic orbit graph. Then $\pog$ is generically minimally rigid on $\Tor^2$ if and only if $\pog$ is a constructive periodic orbit graph.
\label{thm:perLaman}
\end{thm}

\begin{proof}
We will show that $\pog$ is a constructive periodic orbit graph if and only if $\pog$ can be constructed from a single vertex by a sequence of gain-preserving Henneberg moves. 

Suppose that $\pog$ has been constructed by a sequence of gain-preserving Henneberg moves. Then by the Periodic Henneberg Theorem (Theorem \ref{thm:Henneberg}), $\pog$ is generically minimally rigid on $\Tor^2$. By Theorem \ref{thm:perMaxwell}, $G$ is $(2,2)$-tight, and by Proposition \ref{prop:constructiveIsNecessary}, $m$ is a constructive gain assignment, hence $\pog$ is constructive.  

The `only if' part of the proof proceeds by induction on the number of vertices, $n = |V|$. 

First note that the hypothesis is true in the case $|V|=|E|=2$. By the proof of the Periodic Henneberg Theorem (Theorem \ref{thm:Henneberg}), any periodic orbit graph $\pog$ with a constructive gain assignment with $2$ vertices can be obtained as a vertex addition on a single vertex (which is minimally rigid on $\Tor^2$). 

For the inductive step, let $\pog$ be a constructive periodic orbit graph with  $|V| = n \geq 3$, and we assume the claim holds for any constructive periodic orbit graph with $|V|<n$. By Proposition \ref{prop:canAlwaysDelete} we can always delete a $2$- or $3$-valent vertex such that the resulting periodic orbit graph $\pogp$ is constructive. Then $|V'| = n - 1$, hence the inductive hypothesis applies, and $\pogp$ is generically minimally rigid on $\Tor^2$. 

To obtain the original orbit graph under consideration, $\pog$, we perform the appropriate periodic Henneberg move on the graph $\pogp $ as follows:
\begin{enumerate}
\item If a $2$-valent vertex was deleted, simply add back the same edges that were deleted. 
\item If a $3$-valent vertex was deleted, then by Lemma \ref{lem:deletingSplitting} we can edge split the added edge to a obtain the orbit graph $\pogom$, which is $T$-gain equivalent to $\pog$. 
\end{enumerate}

In either case, $\pog$ is generically minimally rigid on $\Tor^2$. In the second case, Theorem \ref{thm:TgainsPreserveRigidity} applies to show that $\pog$ is minimally rigid because $\pogom$ is minimally rigid. %
\end{proof}

\begin{prop}
Let $\pog$ be a constructive periodic orbit graph. Then it is always possible to delete any $2$-valent vertex $v_0$, or perform a reverse edge split on any $3$-valent vertex $v_0$ such that the resulting periodic orbit graph $\langle G_0, \bm_0 \rangle$ is also a constructive periodic orbit graph.
\label{prop:canAlwaysDelete}
\end{prop}

\begin{proof}

Deleting a $2$-valent vertex $v_0$ leaves a graph $G'$ which is a subgraph of the original graph $G$ with $|E'| = 2|V'| - 2$. Since $\bm$ was constructive, this subgraph $\pogp$ also has a constructive gain assignment. %

If $v_0$ is $3$-valent, we have two cases, either $v_0$ is adjacent to two distinct vertices, or $v_0$ is adjacent to three distinct vertices.

Suppose first that $v_0$ is adjacent to two distinct vertices $v_1$ and $v_2$, and that there are two copies of the edge connecting $v_0$ to $v_1$, with gain assignments $\bm_a$ and $\bm_b$. Let the gain assignment of the edge connecting $v_0$ and $v_2$ be $\bm_{02}$ Then the two candidates for edges to insert are $\{v_1, v_2; \bm_{02}- \bm_a\}$, or $\{v_1, v_2; \bm_{02} - \bm_b\}$ (see Figure \ref{fig:schematic}(a)).
By Proposition \ref{prop:v0is2Valent}, it is always possible to add one of these two candidate edges, and the resulting periodic orbit graph will also be constructive. 
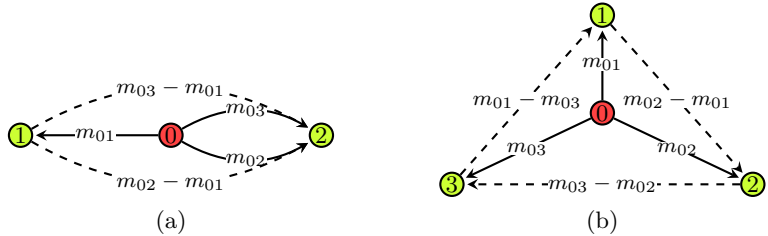
\begin{figure}[h!]
\begin{center}
\subfloat[]{\begin{tikzpicture}[->,>=stealth,shorten >=1pt,auto,node distance=2.8cm,thick, font=\footnotesize] 
\tikzstyle{vertex1}=[circle, draw, fill=couch, inner sep=.5pt, minimum width=3.5pt, font=\footnotesize]; 
\tikzstyle{vertex2}=[circle, draw, fill=melon, inner sep=.5pt, minimum width=3.5pt, font=\footnotesize]; 
\tikzstyle{voltage} = [fill=white, inner sep = 0pt,  font=\scriptsize, anchor=center];

	\node[vertex1] (1) at (-2,0)  {$1$};
	\node[vertex1] (2) at (2,0) {$2$};
	\node[vertex2] (0) at (0, 0) {$0$};

	\draw[thick] (0) -- node[voltage] {$\bm_{01}$} (1);
	
	\draw[thick] (0) edge  [bend right]  node[voltage] {$\bm_{02}$} (2);
	\draw[thick] (0) edge  [bend left] node[voltage] {$\bm_{03}$} (2);
\draw[thick, dashed] (1) edge [bend right]  node[voltage] {$\bm_{02} - \bm_{01}$} (2);
\draw[thick, dashed] (1) edge [bend left]  node[voltage] {$\bm_{03} - \bm_{01}$} (2);
	
\end{tikzpicture}}\hspace{.5in}
\subfloat[]{\begin{tikzpicture}[->,>=stealth,shorten >=1pt,auto,node distance=2.8cm,thick, font=\footnotesize] 
\tikzstyle{vertex1}=[circle, draw, fill=couch, inner sep=.5pt, minimum width=3.5pt, font=\footnotesize]; 
\tikzstyle{vertex2}=[circle, draw, fill=melon, inner sep=.5pt, minimum width=3.5pt, font=\footnotesize]; 
\tikzstyle{voltage} = [fill=white, inner sep = 0pt,  font=\scriptsize, anchor=center];

	\node[vertex1] (1) at (0,1.5)  {$1$};
	\node[vertex1] (2) at (2,-.75) {$2$};
	   \node[vertex1] (4) at (-2, -.75) {$3$};
	\node[vertex2] (3) at (0, .2) {$0$};

	\draw[thick] (3) --  node[voltage] {$\bm_{01}$} (1);
	\draw[thick] (3) -- node[voltage] {$\bm_{02}$} (2);
	\draw[thick] (3) -- node[voltage] {$\bm_{03}$} (4);
\draw[ thick, dashed] (1) --  node[voltage] {$\bm_{02} - \bm_{01}$} (2);
\draw[thick, dashed] (2) --  node[voltage] {$\bm_{03} - \bm_{02}$} (4);
\draw[thick, dashed] (4) --  node[voltage] {$\bm_{01} - \bm_{03}$} (1);
	
\end{tikzpicture}}
\caption{If $v_0$ is adjacent to two distinct vertices, the two candidate edges for insertion in a reverse edge split are the dashed edges in (a). If $v_0$ is adjacent to three distinct vertices, the three candidate edges are the dashed edges in (b). \label{fig:schematic}}
\end{center}
\end{figure}
Now suppose that $v_0$ is adjacent to three distinct vertices $v_1, v_2, v_3$. Suppose the directed edge connecting $v_0$ to $v_i$ has gain assignment $\bm_i$. Then the three candidates for reverse edge split are: $\{v_1, v_2; \bm_{02} - \bm_{01}\}$, $ \{v_2, v_3; \bm_{03} - \bm_{02}\}$, and $ \{v_3, v_1; \bm_{01} - \bm_{03}\}$ (see Figure \ref{fig:schematic}(b)).

Our goal is to prove that there is always at least one edge that can be added such that the resulting periodic orbit graph is constructive. 

Let $H$ be the  graph obtained from $G$ by deleting vertex $v_0$ and its incident edges (as in Lemma \ref{lem:ref1}) Let $G_{ij}$ be a subgraph of $H$ containing the vertices $v_i, v_j$, for $i, j \in \{1, 2, 3\}$. 

Such a subgraph could prevent the addition of the the edge $e = \{v_i, v_j; \bm_{0j} - \bm_{0i}\}$ for one of two reasons: 
\begin{enumerate}
\item the subgraph $(V_{ij}, E_{ij} \cup e)$ would be an over-counted subgraph of $G_0$ (that is, $G_{ij}$ is $(2,2)$-tight already) 
\item or adding the candidate edge would induce a $(2,2)$-tight subgraph of $\langle G_0, m_0 \rangle$ that did not have a constructive gain assignment.  
\end{enumerate}
In Case 1, $G_{ij}$ is $(2,2)$-tight, and in Case 2, $G_{ij}$ is $(2,3)$-tight (it cannot contain any $(2,2)$-tight subgraphs, since these would contain a constructive cycle). Therefore, if one of the candidate edges is {\it not} in the span of any $(2,3)$- or $(2,2)$-tight subgraph of $H$, then we can insert the edge (in this case the gain on the edge is not important) and the claim follows. 

Suppose one of the candidate edges is in the span of a $(2,2)$-tight subgraph of $H$, say $G_{12}$. Then by Lemma \ref{lem:ref1}, neither of $G_{23}$ and $G_{13}$ are $(2,2)$-tight, and at most one of $G_{23}$ and $G_{13}$ are $(2,3)$-tight. Therefore we can always add one of the candidate edges.

Finally, if none of the candidate edges are in the span of a $(2,2)$-tight subgraph of $H$, then the only potentially problematic case we have not eliminated through combinatorial arguments alone is the situation in which all three subgraphs $G_{ij}$ are $(2,3)$-tight. In this case, Proposition \ref{prop:twoConstFailure} applies to show that we can always add an edge, which concludes the proof. 
\end{proof}

\begin{prop}
Let $\pog$ be a constructive periodic orbit graph, where $v_0$ is a vertex connected to vertices $v_1$ and $v_2$ by three edges: 
\[\{v_0, v_1; m_{01}\}, \{v_0, v_2; m_{02}\}, \{v_0,v_2; m_{03}\}.\] After deleting $v_0$ it is always possible to add one of the edges $\{v_1, v_2; \bm_{02}- \bm_{01}\}$ or $\{v_1, v_2; \bm_{03} - \bm_{01}\}$ so that the resulting periodic orbit graph $\pogp$ is also constructive (see Figure \ref{fig:schematic}(a)).
\label{prop:v0is2Valent}
\end{prop}

\begin{proof}
First notice that we cannot have a subgraph $G^* \subset G$ satisfying $|E^*| = 2|V^*| - 2$,  $v_0 \notin V^*$, and  $v_1, v_2 \in V^*$, since this would mean that after adding $v_0$ and its three incident edges, the resulting graph would be an overcounted subgraph of $G$. Therefore, any subgraph $G^*$ containing $v_1$ and $v_2$ but not $v_0$ must satisfy $|E^*| \leq 2|V^*| - 3$. 

We now address the question of whether it is possible that after adding either of the candidate edges, a subgraph $G^*$ is created with $|E^*| = 2|V^*| - 2$ but that has no constructive cycles.

Suppose there exist vertex-induced subgraphs $\langle G_a, \bm_a \rangle$ and $\langle G_b, \bm_b \rangle$ of $\pog$ where $\langle G_a, \bm_a \rangle$ satisfies:
\begin{enumerate}
	\item $|E_a| = 2|V_a| - 3$ 
	\item $v_0 \notin V_a$
	\item $v_1, v_2 \in V_a$ 
	\item all paths from $v_1$ to $v_2$ have net gain $\bm_{02} - \bm_{01}$,
\end{enumerate}
and $\langle G_b, \bm_b \rangle \subset \pog$ satisfies:
\begin{enumerate}
	\item[1'.] $|E_b| = 2|V_b| - 3$ 
	\item[2'.] $v_0 \notin V_b$
	\item[3'.] $v_1, v_2 \in V_b$
	\item[4'.] all paths from $v_1$ to $v_2$ have net gain $\bm_{03} - \bm_{01}$.
\end{enumerate}

Suppose that $\pogp$ is the periodic orbit graph created from $\pog$ by deleting $v_0$ and its incident edges, and adding the edge $e = \{v_1, v_2; \bm_{02}- \bm_{01}\}$.
Let $\langle G_a^*, \bm_a^* \rangle$ and $\langle G_b^*, \bm_b^* \rangle$ be the periodic orbit graphs created from $\langle G_a, \bm_a \rangle$ and $\langle G_b, \bm_b \rangle$ respectively by adding the edge $e = \{v_1, v_2; \bm_{02}- \bm_{01}\}$. Then $\langle G_a^*, \bm_a^* \rangle$ and $\langle G_b^*, \bm_b^* \rangle$ are vertex-induced subgraphs of $\pogp$. 

Then $\langle G_a^*, \bm_a^* \rangle$ is a $(2,2)$-tight subgraph of $\pog$ with no constructive cycles. However, since $G_a^*$ and $G_b^*$ are both $(2,2)$-tight, the intersection of these graphs is also $(2,2)$-tight, and contains $v_1$ and $v_2$ (and therefore contains $e$). Since every $(2,2)$-tight graph is 2-edge connected (it admits a decomposition into two spanning trees), there is some path from $v_1$ to $v_2$ that is distinct from $e$. Because this path is in $\langle G_a^*, \bm_a^* \rangle$, it must have net gain $\bm_{02} - \bm_{01}$. But because the path is also in $\langle G_b^*, \bm_b^* \rangle$, it must also have net gain $\bm_{03} - \bm_{01}$. This is only possible if $\bm_{03} = \bm_{02}$, which contradicts the fact that the original gain assignment $\bm$ is constructive. 

Therefore, subgraphs $\langle G_a, \bm_a \rangle$ and $\langle G_b, \bm_b \rangle$ cannot both exist, and hence it is always possible to add one of the candidate edges. %
\end{proof}

\begin{prop}
Let $\pog$ be a constructive periodic orbit graph. Let $v_0$ be a three-valent vertex incident to the edges $\{v_0, v_1; \bm_{01}\}, \{v_0, v_2; \bm_{02}\}$ and $\{v_0, v_3; \bm_{03}\}$. After deleting $v_0$ it is always possible to add one of the edges $\{v_1, v_2; \bm_{02}- \bm_{01}\}$, $\{v_2, v_3; \bm_{03} - \bm_{02}\}$  or $\{v_3, v_1; \bm_{01} - \bm_{03}\}$ so that the resulting periodic orbit graph $\pogp$ is also constructive (see Figure \ref{fig:schematic}(b)).

\label{prop:twoConstFailure}
\end{prop}

\begin{proof}
We will show that if $\pog$ is a constructive periodic orbit graph with the set-up described above, then there are at most two distinct pairs of vertices from the set $\{v_1, v_2, v_3\}$ that are contained in vertex-induced subgraphs $G_{ij} \subset G$, $i,j \in \{1, 2, 3\}$ satisfying the following (see Figure \ref{fig:twoConstFailureA}):
\begin{enumerate}[(i)]
\item $v_0 \notin V_{ij}$
\item $G_{ij}$ is $(2,3)$-tight
\item $\langle G_{ij}, m|_{G_{ij}} \rangle$ contains no constructive cycle 
\item every path through $\langle G_{ij}, m|_{G_{ij}} \rangle$ originating at $v_i$ and terminating at $v_j$, $i, j \in \{1, 2, 3\}$ has net gain $\bm_{0j} - \bm_{0i}$. 
\end{enumerate}
If three such subgraphs exist, we will not be able to add any of the candidate edges, since each edge edge would create a $(2,2)$-tight subgraph with no constructive cycle. We now show that three subgraphs satisfying (i) -- (iv) cannot exist. 

Toward a contradiction, suppose that there are three such graphs $G_{12}, G_{23}, G_{31}$, with $v_i, v_j \in V_{ij}$. It will be presently be shown that the union of these graphs, $G' = G_{12} \cup G_{23} \cup G_{31} \subset G$ will always satisfy:
\begin{enumerate}[(a)]
\item  $G'$ is a $(2,3)$-tight vertex-induced subgraph of $G$
\item $\langle G', m' \rangle$ contains no constructive cycle
\item  every path through $\langle G', m' \rangle$ originating at $v_i$ and terminating at $v_j$ has net gain $\bm_{0j} - \bm_{0i}$. 
\end{enumerate}

If this is the case, let $V_0 = V' \cup \{v_0\}$, and consider the graph $G_0 = (V_0, E_0)$, where $E_0$ is $E'$ augmented by the three edges connecting $v_0$ with $v_1, v_2, v_3$. Let $\langle G_0, m_0 \rangle$ be the corresponding induced periodic orbit framework (see Figure \ref{fig:twoConstFailureA}). Then $|E_0| = 2|V_0| - 2$, and hence this graph must be constructive. But we know that $\langle G', m' \rangle$ contains no constructive cycle, which means that the constructive cycle in $\langle G_0, m_0 \rangle$ must pass through $v_0$. Hence it must contain two of the edges incident to $v_0$. But any such cycle will have net gain zero, a contradiction. 

\begin{figure}
\begin{center}
\begin{tikzpicture}[very thick,scale=1,>=stealth,->,shorten >=2pt,looseness=0.5,auto] 

\tikzstyle{vertex1}=[circle, draw, fill=couch, inner sep=1pt, minimum width=5pt]; 
\tikzstyle{vertex2}=[circle, draw, fill=melon, inner sep=1pt, minimum width=5pt]; 
\tikzstyle{gain} = [fill=white, inner sep = 0pt,  font=\footnotesize, anchor=center];
\tikzstyle{arrow}=[->,shorten >=1pt,>=stealthÕ,semithick]  
\pgfsetfillopacity{0.3} 
\fill[cloud] (90:1.5cm) ellipse (25mm and 11mm); 
\draw[very thin] (90:1.5cm) ellipse (25mm and 11mm); 
\fill[cloud, rotate=120] (90:1.5cm) ellipse (25mm and 11mm); 
\draw[ very thin, rotate=120] (90:1.5cm) ellipse (25mm and 11mm); 
\fill[cloud, rotate=240] (90:1.5cm) ellipse (25mm and 11mm); 
\draw[very thin, rotate=240] (90:1.5cm) ellipse (25mm and 11mm); 
\pgfsetfillopacity{1}

	\node[vertex1] (x1) at (-90:2cm){$1$};
	\node[vertex1] (x3) at (-210:2cm)  { $3$};
	\node[vertex1] (x2) at (30:2cm) {$2$};
	\node[vertex1] (x) at (0,0) {$0$};

	\draw[->] (x) -- node[gain] {  $ \bm_{01}$} (x1) 	;
	\draw[->] (x) --  node[gain] {  $ \bm_{02}$} (x2);
	\draw[->] (x) --  node[gain] {  $ \bm_{03}$} (x3);

\draw[->, very thick, bluey,shorten >=15pt] (x1) to [controls=+(-15:1) and +(-45:1)] node[gain] {  $\bm_{02} -  \bm_{01}$} (x2);
\node at (2, -2.2) {$G_{12}$};

\draw[->, very thick, melon] (x2) to [controls=+(120:1) and +(60:1)] node[gain]  {  $\bm_{03} -  \bm_{02}$} (x3);
\node at (1.5, 2) {$G_{23}$};
	
\draw[->, very thick, lips] (x3) to [controls=+(225:1) and +(-160:1)] node[gain]  {  $\bm_{01} -  \bm_{03}$} (x1);
\node at (-2, -2.2) {$G_{31}$};	

\end{tikzpicture}
\caption{Subgraphs $G_{ij}$ of $G$ with the properties (i) -- (iv) of the proof of Proposition \ref{prop:twoConstFailure}. \label{fig:twoConstFailureA}}
\end{center}
\end{figure}
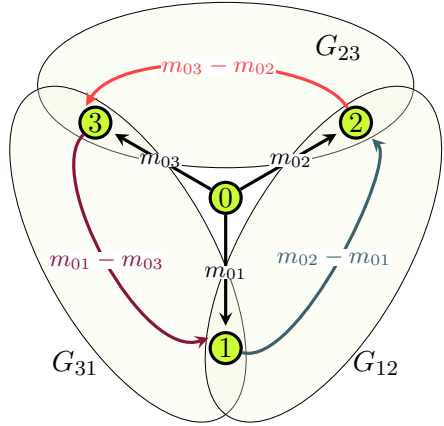

We now show that $\langle G', m' \rangle$ always satisfies properties (a) -- (c) above, and we do this in two cases:

\noindent{\it \underline{Case 1.}} $V_{ij} \cap V_{jk} = \{v_j\}$ for $j \in \{1,2,3\}$\\
In other words, each pair of subgraphs intersects in a single vertex. Here 
\begin{eqnarray*}
|E_{12} \cup E_{23} \cup E_{31}| & = & |E_{12}| + |E_{23}| + |E_{31}|\\
& = & 2(|V_{12}| + |V_{23}| + |V_{31}|) - 9\\
& = & 2|V_{12} \cup V_{23} \cup V_{31}| - 3
\end{eqnarray*}
since 
\begin{eqnarray*}
|V_{12} \cup V_{23} \cup V_{31}| & = & |V_{12}| + |V_{23}| + |V_{31}| - |V_{12} \cap V_{23}| - |V_{23} \cap V_{31}| \\
& & \ \ \ \ - |V_{31} \cap V_{12}| + 2| V_{12} \cap V_{23} \cap V_{31}|\\
& = &  |V_{12}| + |V_{23}| + |V_{31}| - 3.
\end{eqnarray*}
If $\langle G', m' \rangle$ contains a constructive cycle, then it must pass through $v_1, v_2$ and $v_3$. We write the cycle as follows, where the text above the arrow connecting $v_i$ to $v_j$ indicates the net gain on the path from $v_i$ to $v_j$ in the cycle. 
\[v_1 \xrightarrow{m_{02} - m_{01}} v_2 \xrightarrow{m_{03} - m_{02}} v_3 \xrightarrow{m_{01} - m_{03}} v_1.\]
Summing the gains on each part of the cycle we see that it has net gain $0$. Therefore $G'$ satisfies (c) in this case. 

\noindent{\it \underline{Case 2.}} $|V_{ij} \cap V_{jk}| > 1$ for at least one $j \in \{1,2,3\}$. \\
By a repeated application of Lemma \ref{lem:intersection}, we find that the union of these three graphs satisfies $|E'| = 2|V'| - 3$. (Let $G^* = G_{12} \cup G_{23}$. Assuming that $|V_{12} \cap V_{23}| > 1$, apply Lemma \ref{lem:intersection} to see that $|E^*| = 2|V^*| - 3$. Now it must be the case that $|V^* \cap V_{31}| > 1$ as well, since $v_1, v_3$ are in both vertex sets. Another application of Lemma \ref{lem:intersection} gives the result.) Note further that the intersection of $G^*$ and $G_{31}$ contains at least two vertices ($v_1$ and $v_3$), and satisfies $|E^* \cap E_{31}| = 2|V^* \cap V_{31}| -3$ by Lemma \ref{lem:intersection}. Furthermore, this intersection is non-empty. Equivalently, the intersection $V_{12} \cap V_{23} \cap V_{31}$ is non-empty (see Figure \ref{fig:twoConstFailure}).
\begin{verse}
\begin{figure}
\begin{center}
\begin{tikzpicture}[thick,scale=1,>=stealth,->,shorten >=2pt,looseness=0.5,auto] 

\tikzstyle{vertex1}=[circle, draw, fill=couch, inner sep=1pt, minimum width=5pt]; 
\tikzstyle{vertex2}=[circle, draw, fill=melon, inner sep=1pt, minimum width=5pt]; 
\tikzstyle{arrow}=[->,shorten >=1pt,>=stealthÕ,semithick]  
\tikzstyle{gain} = [fill=white, inner sep = 0pt,  font=\footnotesize, anchor=center];

\pgfsetfillopacity{0.3} 
\fill[cloud] (90:1.5cm) circle (20mm); 
\draw[very thin] (90:1.5cm) circle (2cm);
\fill[cloud] (210:1.5cm) circle (20mm); 
\draw[very thin] (210:1.5cm) circle (20mm); 
\fill[cloud] (-30:1.5cm) circle (20mm); 
\draw[very thin] (-30:1.5cm) circle (20mm);
\pgfsetfillopacity{1}

	\node[vertex1] (x1) at (-90:1.5cm){$1$};
	\node[vertex1] (x3) at (-210:1.5cm)  { $3$};
	\node[vertex1] (x2) at (30:1.5cm) {$2$};
	\node[vertex1] (x) at (0,0) {$x$};
	
	\draw[->] (x) --  (x1) 	node [left,text centered,midway]{};
	\draw[->] (x) -- (x2) node [above,text centered,midway]{};
	\draw[->] (x) -- (x3) node [above , text centered,midway]{};

\draw[->, very thick, bluey,shorten >=15pt] (x1) to [controls=+(-30:2) and +(-30:2)] node[gain]  {  $\bm_{02} -  \bm_{01}$} (x2);
\node at (2, -2.2) {$G_{12}$};

\draw[->, very thick, melon] (x2) to [controls=+(105:2) and +(75:2)] node[gain]  {  $\bm_{03} -  \bm_{02}$} (x3);
\node at (1.5, 2) {$G_{23}$};
	
\draw[->, very thick, lips] (x3) to [controls=+(210:2) and +(-145:2)] node[gain]  {  $\bm_{01} -  \bm_{03}$} (x1);
\node at (-2, -2.2) {$G_{31}$};	

\end{tikzpicture}
\end{center}
\caption{Case 2 of the proof of Proposition \ref{prop:twoConstFailure}. \label{fig:twoConstFailure}} 
\end{figure}
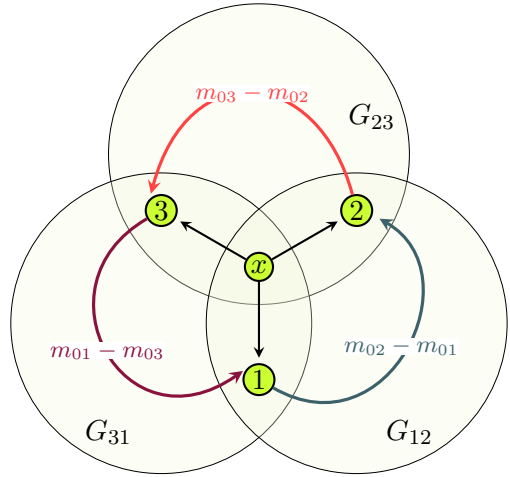\end{verse}

We now demonstrate that $\langle G', m' \rangle$ contains no constructive cycle. We assume that there is a constructive cycle in $\langle G', m' \rangle$, and we will obtain a contradiction to condition {\it (iii)}. We do this in two parts, first by showing that there are no constructive cycles in the union of any pair of subgraphs (a), and next showing that there there are no constructive cycles in the union of all three (b). 

{\it \underline{Case 2a.}}  \ \ Suppose that there is a constructive cycle in the periodic orbit graph induced by $(V_{12} \cup V_{23}, E_{12} \cup E_{23})$.  Suppose that  $|V_{12} \cap V_{23}| > 1$, and that the constructive cycle passes through vertices $x$ and $y$, where $x, y \in V_{12} \cap V_{23}$. The simplest case is pictured in  Figure \ref{fig:twoGraphsConstFailure}.

\begin{verse}
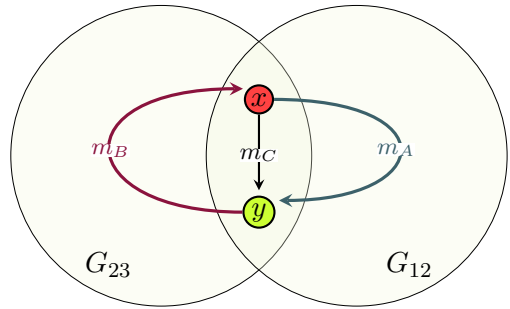
\begin{figure}
\begin{center}
\begin{tikzpicture}[thick,scale=1,>=stealth,->,shorten >=2pt,looseness=0.5,auto]

\tikzstyle{vertex1}=[circle, draw, fill=couch, inner sep=1pt, minimum width=5pt]; 
\tikzstyle{vertex2}=[circle, draw, fill=melon, inner sep=1pt, minimum width=5pt]; 
\tikzstyle{arrow}=[->,shorten >=1pt,>=stealthÕ,semithick]  
\tikzstyle{gain} = [fill=white, inner sep = 0pt,  font=\footnotesize, anchor=center];

\pgfsetfillopacity{0.3} 
\fill[cloud] (210:1.5cm) circle (20mm); 
\draw[very thin] (210:1.5cm) circle (20mm); 
\fill[cloud] (-30:1.5cm) circle (20mm); 
\draw[very thin] (-30:1.5cm) circle (20mm);
\pgfsetfillopacity{1}

	\node[vertex1] (y) at (-90:1.5cm){$y$};
	\node[vertex2] (x) at (0,0) {$x$};
	
	\draw[->] (x) --  node[gain]  {  $ \bm_{C}$} (y) 	;

\draw[->, very thick, bluey,shorten >=12pt] (x) to [controls=+(0:2.5) and +(0:2.5)] node[gain] {  $\bm_A$} (y); 
\node at (2, -2.2) {$G_{12}$};

\draw[->, very thick, lips] (y) to [controls=+(180:2.5) and +(180:2.5)] node[gain] {  $\bm_B$} (x) 	; 	
\node at (-2, -2.2) {$G_{23}$};	

\end{tikzpicture}
\end{center}
\caption{Two subgraphs satisfying {\it (i) -- (iv)} of Proposition \ref{prop:twoConstFailure} whose intersection contains more than one vertex. \label{fig:twoGraphsConstFailure}} 
\end{figure}\end{verse}

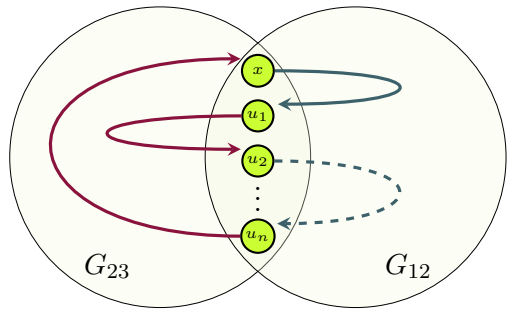
\begin{figure}
\begin{center}
\begin{tikzpicture}[thick,scale=1,>=stealth,->,shorten >=2pt,looseness=0.5,auto] 

\tikzstyle{vertex1}=[circle, draw, fill=couch, inner sep=1pt, minimum width=5pt,font=\tiny]; 
\tikzstyle{vertex2}=[circle, draw, fill=melon, inner sep=1pt, minimum width=5pt]; 
\tikzstyle{arrow}=[->,shorten >=1pt,>=stealthÕ,semithick]  
\pgfsetfillopacity{0.3} 
\fill[cloud] (210:1.5cm) circle (20mm); 
\draw[very thin] (210:1.5cm) circle (20mm); 
\fill[cloud] (-30:1.5cm) circle (20mm); 
\draw[very thin] (-30:1.5cm) circle (20mm);
\pgfsetfillopacity{1}

	\node[vertex1] (un) at (0, -1.8){$u_n$};
	\node[vertex1] (u2) at (0, -.8){$u_2$};
	\node[vertex1] (u1) at (0, -.2){$u_1$};
	\node[vertex1] (x) at (0,.4) {$\ x \ $};
	\node at (0, -1.2) {$\vdots$};
	
\draw[->, very thick, bluey,shorten >=12pt] (x) to [controls=+(0:2.5) and +(0:2.5)] (u1);
\draw[->, very thick, lips] (u1) to [controls=+(180:2.5) and +(180:2.5)] (u2);
\draw[->, very thick, bluey,shorten >=12pt, dashed] (u2) to [controls=+(0:2.5) and +(0:2.5)] (un);
\draw[->, very thick, lips] (un) to [controls=+(180:3.5) and +(180:3.5)] (x);

\node at (-2, -2.2) {$G_{23}$};	
\node at (2, -2.2) {$G_{12}$};

\end{tikzpicture}
\end{center}
\caption{A candidate constructive cycle, {\it Case 2a} of the proof of Proposition \ref{prop:twoConstFailure}. \label{fig:twoGraphsConstFailureII}} 
\end{figure}

Suppose first that the constructive cycle in $G_{12} \cup G_{23}$ is as pictured in Figure \ref{fig:twoGraphsConstFailure}, and the cycle does not go in and out of $G_{12} \cap G_{23}$ (as pictured in Figure \ref{fig:twoGraphsConstFailureII}). Denote the part of the constructive cycle from $x$ to $y$ in $G_{12}$ by $x \xrightarrow{m_A} y$. Similarly, let $y \xrightarrow{m_B} x$ denote the part of the constructive cycle from $y$ to $x$ in $G_{23}$.  Then $\bm_A + \bm_B \neq 0$ by assumption.  By Corollary \ref{cor:intersection}, the graph $(V_{12} \cap V_{23}, E_{12} \cap E_{23})$ is connected. Hence there exists a path through this graph that connects $x$ to $y$. Let the net gain of this path be $\bm_C$. Then we write the cycle
$x \xrightarrow{m_A} y \xrightarrow{m_B} x$ as 
\[x \xrightarrow{m_A} y \xrightarrow{m_C} x \xrightarrow{-m_C} y \xrightarrow{m_B} x.\]
But $m_A +m_C = m_B - m_C = 0$, hence the net gain on this cycle is $0$, which contradicts our assumption that it was constructive.

%

Now suppose the constructive cycle in $G_{12} \cup G_{23}$ is as pictured in Figure \ref{fig:twoGraphsConstFailureII}, and the path weaves in and out of the intersection $G_{12} \cap G_{23}$. Let $x, u_1, \dots, u_n \in V_{12} \cap V_{23}$, and suppose the constructive cycle is as follows: 
\[x \xrightarrow{m_1} u_1 \xrightarrow{m_2} u_2 \xrightarrow{m_3} \dots \xrightarrow{m_n} u_n \xrightarrow{m_{n+1}} x,\]
with $\sum m_i \neq 0$, and where the path $x \xrightarrow{m_1} u_1$ is completely contained in $G_{12}$, $u_1 \xrightarrow{m_2} u_2$ is completely contained in $G_{23}$ and so on, with the path segments continuing to alternate between $G_{12}$ and $G_{23}$. 

Expand the constructive cycle as follows, adding a path to $x$ in $G_{12} \cap G_{23}$ between each path segment $u_i \rightarrow u_{i+1}$. Since each such path is traversed in both directions, it does not change the net gain on the cycle.
\[x \xrightarrow{m_1} u_1 \rightarrow x \rightarrow u_1 \xrightarrow{m_2} u_2 \rightarrow x \rightarrow u_2 \xrightarrow{m_3} \dots  \xrightarrow{m_n} u_n \xrightarrow{m_{n+1}} x.\]
The cycle $x \rightarrow u_1 \rightarrow x$, where $u_1 \rightarrow x$ is completely contained in $G_{12}$, and therefore has net gain zero. Similarly, the cycle $x \rightarrow u_1 \rightarrow u_2 \rightarrow x$ is completely contained in $G_{23}$ and therefore has net gain zero. Continuing in this way, we see that the constructive cycle is the sum of cycles with net gain zero, and hence is not constructive:

\[\underbrace{x \xrightarrow{m_1} u_1 \rightarrow}_0 \underbrace{x \rightarrow u_1 \xrightarrow{m_2} u_2 \rightarrow}_0 \underbrace{x \rightarrow u_2 \xrightarrow{m_3} \dots}_0 \underbrace{\dots \xrightarrow{m_n} u_n \xrightarrow{m_{n+1}} x}_0.\]

{\it \underline{Case 2b.}} Now assume that there is a constructive cycle in the subgraph of $G$ on the vertices $V_{12} \cup V_{23} \cup V_{31}$. See Figure \ref{fig:twoConstFailure}.  By a similar argument to the previous case, suppose that the constructive cycle is written as the sum of three paths, one through each of the graphs. That is, let $x_1 \in V_{31} \cap V_{12}$, $x_2 \in V_{12} \cap V_{23}$, and $x_3 \in V_{23} \cap V_{31}$. If any of the vertices $x_1, x_2, x_3$ is in the intersection of all three graphs, then we are in the situation described in Case 2a. So we assume that this is not the case, and the constructive cycle does not pass through any vertex of the intersection. In the simplest case, the the constructive cycle may be broken into three components, one in each subgraph $G_{ij}$. We write 
\[x_1 \xrightarrow{m_{02} - m_{01}} x_2 \xrightarrow{m_{03} - m_{02}} x_3 \xrightarrow{m_{01} - m_{03}} x_1.\]
Summing the gains on each component of the cycle it is clear that the cycle has net gain $0$, and is therefore not constructive (Figure \ref{fig:twoConstFailure}).

In the case that the constructive cycle cannot be broken into these three pieces, we use the same approach as in Case 2a. Now the constructive cycle weaves in and out of the subgraphs $G_{ij}$. 
Since the intersection is non-empty, let $x \in V_{12} \cap V_{23} \cap V_{31}$. Each pairwise intersection $V_{ij} \cap V_{jk}$ is connected, hence for the vertex $x_i \in V_{ki} \cap V_{ij}$ there is a path connecting $x$ to $x_i$. As in the previous case, expand the constructive cycle by adding a path to and from the vertex $x$ until the cycle is a sum of smaller cycles, each of which is completely contained in $G_{ij}$ for some $i, j$. As before we see that the original cycle hence has net gain $0$. 

%
%


To see that $\langle G', m' \rangle$ also satisfies property (c), we consider without loss of generality, all paths $P$ from $v_1$ to $v_2$ through $G'$. If each vertex of the path is in $V_{12}$ then it has net gain $\bm_{02} - \bm_{01}$ by hypothesis. If some vertex in $P$ is not in $V_{12}$, then suppose $P$ has net gain $\bm_P$. Then $\bm_P - (\bm_{02} - \bm_{01}) = 0$, since $G'$ has no trivial cycles, by (b). Hence $\bm_P = \bm_{02} - \bm_{01}$, as desired. 

\end{proof}

\section{Further work and related questions}
\label{sec:conclusion}

\subsection{Algorithms}
An algorithm for determining the rigidity of a periodic orbit framework on $\Tor^2$ appears in \cite{fixedConeAlgorithms}, with running time $O(n^3)$. The basic idea is based on the pebble game algorithm for finite frameworks due to Jacobs and Hendrickson \cite{pebbleGame}, and developed in \cite{pebbleGameSparse, SparsityCertifying}. The key idea for the fixed torus algorithm in \cite{fixedConeAlgorithms} is to run the $(2,3)$- and $(2,2)$-pebble games simultaneously. 

\subsection{Higher dimensions}
In \cite{ThesisPaper1} we presented necessary conditions for rigidity on the $d$-dimensional fixed torus $\Tor^d$. Unfortunately, finding sufficient conditions for generic rigidity on the $d$-dimensional fixed torus rests on solving {\it finite} $d$-dimensional rigidity (finite rigidity is combinatorially characterized for $d = 1, 2$ but not for higher dimensions). For example, it is possible to embed the well-known ``double bananas" example in a three-dimensional periodic framework. See \cite{ThesisPaper1} for further details. 

\subsection{Body-bar frameworks on the fixed torus}
In contrast to the situation for bar-joint frameworks, the generic rigidity of body-bar frameworks on the fixed torus has recently been completely described. The characterization is based on a sparsity condition which depends on the dimension of the gain space. Let $H$ be a multigraph, possibly including loops, with vertex and edge sets $V(H)$ and $E(H)$ respectively. Let $Y \subset E(H)$, and let $V(Y)$ be the set of vertices incident to the edges of $Y$. We use $\gs(Y)$ to denote the gain space of the subgraph of $\bbog$ generated by $(V(Y), Y)$.
\begin{thm}
$\bbog$ is a periodic orbit graph corresponding to a generically minimally rigid body-bar periodic framework in $\mathbb R^d$ if and only if  $|E(H)| = {d+1 \choose 2}|V(H)| - d$ and for all non-empty subsets $Y \subset E(H)$ of edges
		\begin{equation*}
			|Y| \leq {d+1 \choose 2}|V(Y)| - {d+1 \choose 2} + \sum_{i=1}^{|\gs(Y)|}(d-i).
		\end{equation*}	
\end{thm}
The basic idea of this result is that as the dimension of the gain space increases, the maximum number of edges which may be independent also increases. There is an inductive proof of this result for $d\leq 3$ \cite{bodyBar} ($d=1, 2$ follow from the bar-joint characterizations). A non-inductive proof for all dimensions was recently announced in \cite{matroidsOfGainGraphs}, as part of a more general set of results about body-bar frameworks with point group symmetries. A general theory of periodic body-bar frameworks on the flexible torus has been set out in \cite{PeriodicBarBody}, but without a characterization of rigidity based on the underlying gain graph.

\subsection{Inductive constructions on the flexible torus}
In \cite{myThesis}, a characterization was established of the generic rigidity of periodic frameworks on a partially variable torus (allowing one degree of flexibility). Together with Anthony Nixon, we outlined an inductive proof of this result \cite{NixonRoss}. 

\begin{thm}[Nixon and Ross \cite{NixonRoss}]
A framework $\pofw$ is generically minimally rigid on the partially variable torus (with one degree of freedom) if and only if it can be constructed from a single loop by a sequence of extended gain-preserving Henneberg operations. 
\label{thm:partFlexHenn}
\end{thm}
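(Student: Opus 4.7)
The plan is to mirror the structure of the fixed-torus argument (Theorems~\ref{thm:Henneberg} and~\ref{thm:perLaman}) but redo the bookkeeping for the partially variable torus, on which the rigidity matrix acquires one additional column recording the free lattice parameter. There are still exactly two trivial infinitesimal motions (the translations), so a minimally rigid orbit framework must now satisfy $|E|=2|V|-1$, and the appropriate base case of the induction is a single vertex carrying a loop, which contributes one non-degenerate row to the rigidity matrix (the loop ``sees'' the lattice degree of freedom). This replaces the single-edge base case of the fixed-torus version.

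First I would enumerate the Henneberg-type operations compatible with the new count: a $2$-addition adds a new vertex with two incident edges, so the count is preserved as $|E|+2=2(|V|+1)-1$, and an edge-split removes an edge and introduces a $3$-valent vertex, again preserving the count. The adjective ``gain-preserving'' refers to the requirement that the gains on the inserted edges be chosen so that the net gain on the cycle formed (in the edge-split case) matches that of the split edge, and so that multi-edges always carry distinct gains. The forward direction---that these moves preserve generic minimal rigidity---follows the template of Propositions~\ref{prop:vertexAddition} and~\ref{prop:edgeSplit}: exhibit one special configuration of the new vertex (placed collinearly, in the edge-split case, with the endpoints of the split edge after accounting for gains) in which the new rows of the enlarged rigidity matrix are independent of the old ones, then invoke an analog of the Special Position Lemma for the partially variable torus.

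The reverse direction is the real work. The count $|E|=2|V|-1$ forces a vertex of degree at most $3$, and deleting such a vertex preserves the hereditary sparsity structure. The challenge is to show that for a $3$-valent vertex we can always choose one of the three candidate replacement edges so that the resulting graph remains $(2,\cdot)$-sparse in the sense appropriate to the partially variable torus \emph{and} still admits a constructive-type gain assignment. The correct sparsity condition is a natural strengthening of Theorem~\ref{thm:perLaman}: a subgraph $\pogp \subseteq \pog$ is allowed $2|V'|-1$ edges only when its $T$-gain image engages the varying lattice direction (so that its cycle-gain space contains a vector with non-zero component in the free coordinate), and $2|V'|-2$ edges otherwise. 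The reverse-move analysis then splits into the same combinatorial cases as in Lemmas~\ref{lem:oneCombFailure}--\ref{lem:twoConstFailure}, but with an added layer tracking which obstructing subgraphs see the lattice parameter.

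The hard part will be the analog of Case~2 of Proposition~\ref{prop:technical3valent}: showing that the union of three hypothetical obstructing subgraphs around a $3$-valent vertex $v_0$, each preventing the insertion of one of the three candidate edges, always yields a contradiction. The obstructions now come in two flavours---failure of the combinatorial bound and failure of the gain condition along the one free lattice direction---and the intersection--union arguments from Lemma~\ref{lem:intersection} must be reworked with the refined counts and a careful record of which projections of the gain space are ``filled up'' by each candidate subgraph. Once this technical lemma is in place, the induction closes exactly as in Section~\ref{sec:constructiveGainAssignmentsAreSufficient}: delete a $2$- or $3$-valent vertex via the reverse move, apply the inductive hypothesis to the smaller graph, and rebuild using an analog of Lemma~\ref{lem:deletingSplitting} to match the original graph up to $T$-gain equivalence.
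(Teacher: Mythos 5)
The paper does not actually prove Theorem~\ref{thm:partFlexHenn} here (it is cited to \cite{NixonRoss}), but the surrounding text explicitly warns against exactly the plan you propose, and that warning identifies a genuine gap in your argument.

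Your proposal assumes that the same two moves---periodic vertex addition ($2$-addition) and periodic edge split---suffice, with ``gain-preserving'' choices, to build every generically minimally rigid graph on the partially variable torus, and that the reverse direction is handled by deleting a vertex of degree at most $3$. The paper states plainly that this fails: ``We also require an additional move to deal with a special class of graphs for which the existing moves are insufficient.'' So there are minimally rigid graphs on the one-parameter flexible torus that cannot be reduced by a $2$-valent deletion or a reverse edge split to a smaller graph in the class; your Case~2 analysis, however carefully reworked from Lemmas~\ref{lem:oneCombFailure}--\ref{lem:twoConstFailure}, cannot close the induction without introducing a new operation. A correct proof has to identify this obstructing family and define (and verify the rank behaviour of) a third Henneberg move tailored to it.

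A second, related problem is your proposed characterization of the graph class. You describe it as a gain-refined $(2,1)$-sparsity condition: subgraphs get $2|V'|-1$ edges only when their cycle-gain space hits the free lattice coordinate, $2|V'|-2$ otherwise. The paper instead describes the correct class purely in terms of the underlying multigraph: it is the set of graphs that decompose as a spanning tree plus a \emph{connected} spanning map graph, and it emphasizes that this is \emph{strictly smaller} than the naive $(2,1)$-sparse class. That connectivity requirement on the map-graph summand is an extra structural constraint that does not appear in your sparsity condition, and it is precisely the sort of condition that is not automatically preserved by the two standard reverse moves---which is presumably what forces the extra Henneberg operation. Your base case (a single loop) and count $|E|=2|V|-1$ are correct, and the high-level template (Special Position Lemma analog, intersection--union bookkeeping, $T$-gain matching \`a la Lemma~\ref{lem:deletingSplitting}) is the right skeleton, but without the third move and the sharper class characterization the induction does not close.
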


The operations referred to in the theorem above contain the gain-preserving vertex addition and edge split operations described in this paper, but we also require an additional move to deal with a special class of graphs for which the existing moves are insufficient. Further challenges arise when attempting to apply inductive techniques to the fully variable torus (having three degrees of freedom, this is what is called a ``periodic framework" in \cite{periodicFrameworksAndFlexibility}), due to the fact that there may no longer be vertices of degree 2 or 3, necessitating the development of further inductive moves. 

\subsection{Theta graphs}
A property that emerges in the proof of Proposition \ref{prop:twoConstFailure}, is the {\it theta graph property} \cite{BiasedGraphsI}. A {\it theta graph} is a subdivision of the triple link graph (two vertices connected by three internally disjoint paths). The proof of Proposition \ref{prop:twoConstFailure} established that whenever the union of two cycles with net gain zero is a theta graph, the third cycle in the union also has net gain zero. In other words, the cycles with net gain zero form a {\it linear subclass} of the set of all cycles. 

This property forms the basis for the theory of biased matroids in \cite{BiasedGraphsI}. It is known that the balanced cycles (cycles having net gain zero) of any gain graph are a linear class. It is thus natural to ask whether gain graphs with other group labels also admit Henneberg-type constructions. This question has been considered in \cite{gainSparsity} for frameworks in the plane with cyclic or odd-order dihedral symmetry.

\noindent {\it Acknowledgements.} The author wishes to thank Walter Whiteley, and the anonymous referees for numerous helpful suggestions on earlier versions of this material.

\bibliographystyle{abbrv} 
\bibliography{Papers} 

\end{document}